\def\@cite#1#2{[{{\bfseries #1}\if@tempswa , #2\fi}]}
\renewcommand{\section}{%
\@startsection{section}{1}{\z@}
{0.5truecm plus -1ex minus -.2ex}%
{1.0ex plus .2ex}{\bfseries\large}}
\def\@seccntformat#1{\csname the#1\endcsname.\ }
\numberwithin{equation}{section} 
\newtheorem{thm}{Theorem}[section]
\newtheorem{corollary}[thm]{Corollary}
\newtheorem{lem}[thm]{Lemma}
\theoremstyle{definition}
\newtheorem{remark}{Remark}[section]
\newcommand{\ep}{\varepsilon}
\newcommand{\pa}{\partial}
\newcommand{\ol}{\overline}
\newcommand{\wt}{\widetilde}
\newcommand{\tmax}{T_{\max}}
\newcommand{\lp}[2]{\| #2 \|_{L^{#1}(\Omega)}}
\newcommand{\ctime}{T_0}
\newcommand{\cKlam}{L}
\newcommand{\Ko}{K_1}
\newcommand{\Ktw}{K_2} 
\newcommand{\Kth}{K_3}
\newcommand{\Kf}{K_4}
\newcommand{\cChat}{C_2}
\newcommand{\cCwt}{C_1}
\newcommand{\cmu}{\xi}
\newcommand{\mass}{m_0}
\newcommand{\wmax}{b}
\newcommand{\umax}{\ol{M}}
\begin{document}
\footnote[0]
    {2020{\it Mathematics Subject Classification}\/. 
    Primary: 35B40; 
    Secondary: 35K45, 35Q92, 92C17.
    }
\footnote[0]
    {{\it Key words and phrases}\/: 
     Tuberculosis; chemotaxis; asymptotic behavior; stability.}
\begin{center}
\Large{{\bf 
Boundedness and asymptotic stability
in a model \\
for tuberculosis granuloma formation}}
\end{center}
\vspace{5pt}
\begin{center}
    Masaaki Mizukami
    \\
               \vspace{8pt}
    Department of Mathematics, Faculty of Education, Kyoto University of Education\\
    1, Fujinomori, Fukakusa, Fushimi-ku, Kyoto, 612-8522, Japan\\
               \vspace{15pt}
    Yuya Tanaka
    \footnote{Corresponding author.}\\
               \vspace{8pt}
    Department of Mathematical Sciences,
    Kwansei Gakuin University\\
    1, Uegahara, Gakuen, Sanda, Hyogo, 669-1330, Japan\\ 
        \footnote[0]{{E-mail addresses}:
    {\tt masaaki.mizukami.math@gmail.com},
    {\tt yuya.tns.6308@gmail.com}}
               \vspace{15pt}
\end{center}
\begin{center}    
    \small \today
\end{center}

\vspace{2pt}
\newenvironment{summary}
{\vspace{.5\baselineskip}\begin{list}{}{%
     \setlength{\baselineskip}{0.85\baselineskip}
     \setlength{\topsep}{0pt}
     \setlength{\leftmargin}{12mm}
     \setlength{\rightmargin}{12mm}
     \setlength{\listparindent}{0mm}
     \setlength{\itemindent}{\listparindent}
     \setlength{\parsep}{0pt}
     \item\relax}}{\end{list}\vspace{.5\baselineskip}}
\begin{summary}
{\footnotesize {\bf Abstract.} 
This paper deals with a problem which describes tuberculosis granuloma formation
  \begin{align*}
  \begin{cases}
    u_t = \Delta u - \nabla \cdot (u \nabla v) 
            - uv - u + \beta,
    &x \in \Omega,\ t>0, \\
    v_t = \Delta v + v -uv +  \mu w,
    &x \in \Omega,\ t>0, \\
    w_t = \Delta w + uv - wz - w,
    &x \in \Omega,\ t>0, \\
    z_t = \Delta z - \nabla \cdot (z \nabla w)
            + f(w)z -z,
    &x \in \Omega,\ t>0
  \end{cases}
  \end{align*}
under homogeneous Neumann boundary conditions and initial conditions, where $\Omega \subset \mathbb{R}^n$ ($n\ge 2$)
is a smooth bounded domain, 
$\beta,\mu>0$ and $f$ is some function, and shows that, 
if the reproduction number $R_0 := \frac{\mu \beta + 1}{\beta}$ satisfies $R_0<1$
and
initial data are small in some sense, 
then the solution $(u,v,w,z)$ of the problem exists globally and convergences to $(\beta,0,0,0)$ exponentially. 
}
\end{summary}
\vspace{10pt}

\newpage

%
%
\section{Introduction}
\noindent\textbf{Bachground.}  
Chemotaxis is a property such that cells or species react on some chemical substances, and move toward higher gradient of that substance. 
Here it is known that chemotaxis plays an important role in not only biology but also in medicine and engineering. 
One of the well-studied problems which describe chemotactic aggregation phenomena is a Keller--Segel system 
\begin{align*}
u_t = \Delta u - \nabla \cdot (u\nabla v), \quad v_t = \Delta v-v +u, \qquad x\in \Omega, \ t >0,   
\end{align*}
under homogeneous Nuemann boundary conditions and initial conditions. 
In this system the term $ - \nabla \cdot (u\nabla v) $ represents chemotactic aggregation effect, and causes difficulties in analyzing this system; for more details, see surveys \cite{B-B-T-W,LW_Survey,Arumugam-Tyagi}. 

The modeling and analysis of the Keller--Segel systems are extended to variations which describe not only biological phenomena (e.g., two-species chemotaxis-competition problems \cite{Tello_Winkler_2012,MTY_BU,PanMuTao_2023}, aggregation of cells in a drop of water \cite{Tuvaletal,W-2012,Winkler_2016}) but also medical phenomena (e.g., Alzheimer's disease \cite{AlzheimerModeling}, the tumor invasion \cite{FujieItoYokota,FIWY,Ishida-Yokota_2023}, 
the tumor angiogenesis \cite{OrmeChaplain,TW2021_angiogenesis,Chiyo-Mizukami}).
Recently, as to the tuberculosis granuloma formation, 
Feng \cite{Feng2024} developed mathematical models of the evolution of granuloma by the Keller--Segel system, 
and proposed the following problem
  \begin{align}\label{OriginalProb}
  \begin{cases}
    u_t = D_u \Delta u - \chi_u \nabla \cdot (u \nabla v) 
            -\gamma_u uv - \delta_u u + \beta_u,
    &x \in \Omega,\ t>0, \\
    v_t = D_v\Delta v + \rho_v v - \gamma_v uv +  \mu_v w,
    &x \in \Omega,\ t>0, \\
    w_t = D_w \Delta w + \gamma_w uv - \alpha_w wz - \mu_w w,
    &x \in \Omega,\ t>0, \\
    z_t = D_z 
    \Delta z - \chi_z \nabla \cdot (z \nabla w)
            + \alpha_z f(w)z - \delta_z z,
    &x \in \Omega,\ t>0
  \end{cases}
  \end{align}
under homogeneous Neumann boundary conditions and initial conditions, 
where 
\[D_u,D_v,D_w,D_z,\chi_u,\chi_z,\gamma_u,\gamma_v,\gamma_w,\mu_v,\mu_w,\alpha_w,\alpha_z,\delta_u,\delta_z,\rho_v,\beta_u>0\] 
and 
\[ f(w) = \frac{w}{1+w} \quad \mbox{or} \quad f(w) = w. \]
Here, unknown functions $u,v,w,z$ describe the density of healthy macrophages, bacteria, infected macrophages, CD4 T cells, respectively 
(For more detail explanations of the model and history of modeling, see \cite{Feng2024,FLM3}).

\medskip
\noindent \textbf{Previous works and main result.} 
Feng \cite{Feng2024} first dealt with an ODE problem of \eqref{OriginalProb} with $\gamma_u=\gamma_w =: \gamma_i$ and $f(w)= \frac{w}{1+w}$, and showed that the reproduction number 
\[
  R_0 := \frac{s\beta_u \gamma_i + \rho_v\delta_u}{\beta_u \gamma_v} \qquad \left(s=\frac{\mu_v}{\mu_w} \right)
\]
was important for analysis of boundary equilibria: 
The infection-free equilibrium $E_0 := (\frac{\beta_u}{\delta_u},0,0,0)$ is locally asymptotically stable when $R_0 < 1$; existence and complicated analysis of an immune-free equilibrium $E_1 := (u^\ast,v^\ast,w^\ast,0)$ were shown when $R_0 <1$. 
Feng also analyzed other kinds of equilibrium of the ODE problem, and numerical simulation not only the ODE problem but also the PDE problem in 1-/2-dimensional settings.

About the mathematical analysis of the original problem \eqref{OriginalProb}, 
global existence of classical/weak solutions was established in 2-/3-dimensional settings, respectively in \cite{FLM3}. 
Nevertheless, 
at least two problems are left: 
{\it Boundedness of the solutions};  
{\it global existence of classical solutions in $n$-dimensional setting $(n\ge 3)$}. 
The purpose of this paper is to give some answer to
these problems; in this paper for the simplicity we let almost constants in \eqref{OriginalProb} except essential constants ($\beta_u=\beta$, $\mu_v = \mu$) be equal to 1, and consider the following problem:
  \begin{align}\label{P}
  \begin{cases}
    u_t = \Delta u - \nabla \cdot (u \nabla v) 
            - uv - u + \beta,
    &x \in \Omega,\ t>0, \\
    v_t = \Delta v + v -uv +  \mu w,
    &x \in \Omega,\ t>0, \\
    w_t = \Delta w + uv - wz - w,
    &x \in \Omega,\ t>0, \\
    z_t = \Delta z - \nabla \cdot (z \nabla w)
            + f(w)z -z,
    &x \in \Omega,\ t>0, \\
    \nabla u \cdot \nu 
    = \nabla v \cdot \nu 
    = \nabla w \cdot \nu
    = \nabla z \cdot \nu
    = 0,
    &x \in \partial \Omega,\ t>0, \\
    u(\cdot,0) = u_0, 
    \quad
    v(\cdot,0) = v_0, 
    \quad
    w(\cdot,0) = w_0,
    \quad
    z(\cdot,0) = z_0,
    &x \in \Omega, 
  \end{cases}
  \end{align}
where $\Omega \subset \mathbb{R}^n$ ($n \ge 2$) 
is a bounded domain with smooth boundary $\partial\Omega$, 
$\beta, \mu>0$, 
$\nu$ is the outward normal vector to $\pa\Omega$, 
and $u_0, v_0, w_0, z_0 \geq 0$ in $\Omega$.
Here we note that, since $\mu_v=\mu$ and  $\beta_u = \beta$ as well as $\mu_w = \gamma_v = \gamma_w = \rho_v = \delta_u=1$ , the reproduction number of \eqref{P} is 
  \[
    R_0 = \frac{\mu \beta + 1}{\beta}.
  \]

The main result reads as follows. 
In this result we showed not only global existence and boundedness of classical solutions 
to \eqref{P} but also its asymptotic behavior.
More precisely, 
the infection-free equilibrium $E_0=(\beta,0,0,0)$ is asymptotically stable when $R_0<1$ and initial data $v_0$ and $w_0$ are small in some sense.

\begin{thm}\label{thm}
Let $\Omega \subset \mathbb{R}^n$ $(n\ge2)$ be a smooth, bounded domain
and let $\beta, \mu>0$ and $q>n$.
Suppose that 
  \begin{align}\label{f}
    f \in C^1(\mathbb{R}),
    \qquad 
    0 \le f(s) \le s
    \quad\mbox{for all $s\in(0,\infty)$}
  \end{align}
and 
  \begin{align}\label{rep}
    R_0 = \frac{\mu \beta + 1}{\beta} < 1.
  \end{align}
Then for all $\alpha>0$ and 
$\xi\in\big(\mu,\frac{\beta-1}{\beta}\big)$ 
there exist $\gamma, \ep_0, C>0$
with the following property\/{\rm :}
Whenever initial data satisfies that 
  \begin{align}\label{initial}
    (u_0,v_0,w_0,z_0) \in C^0(\ol{\Omega}) \times W^{1,q}(\Omega) \times W^{1,q}(\Omega) \times C^0(\ol{\Omega})
    \quad\mbox{is nonnegative}
  \end{align}
and 
  \[
    \| u_0 - \beta \|_{L^\infty(\Omega)}
    =\alpha,
    \qquad
    \|v_0 + \xi w_0\|_{L^\infty(\Omega)} 
    \le \ep_0
    \quad\mbox{and}\quad
    \| \nabla v_0 \|_{L^q(\Omega)}
    \le 
    \sqrt{\ep_0},
  \]
then there exists a unique global classical solution of \eqref{P} which fulfills that
  \[
    \| u(\cdot,t)-\beta \|_{L^\infty(\Omega)}
    + \| v(\cdot,t) \|_{W^{1,q}(\Omega)}
    + \| w(\cdot,t) \|_{W^{1,q}(\Omega)}
    + \| z(\cdot,t) \|_{L^\infty(\Omega)}
    \le C e^{-\gamma t}
  \]
for all $t\in(0,\infty)$.
\end{thm}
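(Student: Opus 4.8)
The plan is to combine a local existence/extensibility result with a bootstrap argument that controls the five quantities $\|u-\beta\|_{L^\infty}$, $\|v\|_{L^\infty}$, $\|w\|_{L^\infty}$, $\|z\|_{L^\infty}$ and $\|\nabla v\|_{L^q}$, $\|\nabla w\|_{L^q}$ on a maximal interval $(0,\tmax)$, and then to close the estimates so that all of these decay exponentially, which in particular forces $\tmax=\infty$. Local existence of a unique classical solution on a maximal interval $(0,\tmax)$, with the standard extensibility criterion (if $\tmax<\infty$ then $\limsup_{t\to\tmax}(\|u(\cdot,t)\|_{L^\infty}+\|v(\cdot,t)\|_{W^{1,q}}+\|w(\cdot,t)\|_{W^{1,q}}+\|z(\cdot,t)\|_{L^\infty})=\infty$), follows from the usual fixed-point/semigroup theory for chemotaxis systems; I would invoke it and nonnegativity of $u,v,w,z$ from the maximum principle as a first step.

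The heart of the argument is a Lyapunov-type functional for the $(v,w)$-pair. The key algebraic observation is that, since $R_0<1$ means $\mu\beta+1<\beta$, i.e. $\mu<\frac{\beta-1}{\beta}$, one can pick $\xi\in(\mu,\frac{\beta-1}{\beta})$ and then, using $u\approx\beta$, check that the linear part of the system for $(v,w)$,
\begin{align*}
\frac{d}{dt}(v+\xi w) &= \Delta(v+\xi w) + v - uv + \mu w + \xi(uv - wz - w)\\
&= \Delta(v+\xi w) + (1-u)(1-\xi)v + (\mu - \xi)w + \text{(good sign terms)} + \xi uv - \xi uv,
\end{align*}
has strictly negative "reaction" when $u$ is close to $\beta$: the coefficient of $v$ is $(1-\xi)(1-u)+\xi u - u = (1-\xi)(1-u)$, wait — one should instead test the equations for $v$ and $\xi w$ in $L^p$ (or $L^\infty$ via a comparison/Duhamel argument) and extract the dissipation $-(\beta-1)v$ plus $-(\xi-\mu)w$ up to terms that are controlled either by the smallness of $\|v_0+\xi w_0\|_{L^\infty}$ or by higher powers. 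Concretely I would derive, for $y(t):=\|v(\cdot,t)+\xi w(\cdot,t)\|_{L^\infty(\Omega)}$ (or a suitable $L^p$ surrogate), a differential inequality of the form $y'\le -\delta y + (\text{terms}\to 0)$, where $\delta>0$ comes precisely from $\xi>\mu$ and from $u$ being $\ep_0$-close to $\beta>1$; this yields $\|v\|_{L^\infty},\|w\|_{L^\infty}\le C e^{-\gamma t}$ as long as $\|u-\beta\|_{L^\infty}$ stays small. Feeding this into the $u$-equation, $u-\beta$ satisfies $(u-\beta)_t=\Delta(u-\beta)-\nabla\cdot(u\nabla v)-(u-\beta)-uv-\beta+\beta$, i.e. a heat equation with decaying forcing $-\nabla\cdot(u\nabla v)-uv$ and linear damping $-(u-\beta)$ plus the source $-\beta\cdot(\text{something small})$; since $uv$ and the drift term decay exponentially once $v$ and $\nabla v$ do, $\|u-\beta\|_{L^\infty}$ decays exponentially too. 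The drift term $\nabla\cdot(u\nabla v)$ requires $\|\nabla v\|_{L^q}$ control: I would run a parallel estimate on $\nabla v$ (and $\nabla w$) in $L^q$ using $L^p$–$L^q$ smoothing of the Neumann heat semigroup on the $v$- and $w$-equations, where the right-hand sides $v-uv+\mu w$ and $uv-wz-w$ are already known to decay, closing the loop.

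Thus the bootstrap runs as follows: fix constants $\gamma,\ep_0$ and define $T\in(0,\tmax]$ as the supremum of times up to which $\|u(\cdot,t)-\beta\|_{L^\infty}\le \ep_1$ (some fixed small threshold depending on $\beta,\xi$) and $\|v(\cdot,t)+\xi w(\cdot,t)\|_{L^\infty}\le 2\ep_0 e^{-\gamma t}$ and $\|\nabla v(\cdot,t)\|_{L^q}\le 2\sqrt{\ep_0}\,e^{-\gamma t}$ hold. On $(0,T)$ the above estimates give \emph{strict} improvement of every bound (e.g.\ $\le \frac32\ep_0 e^{-\gamma t}$ etc.) provided $\ep_0$ is small enough and $\alpha$-dependence is tracked carefully; a continuity argument then shows $T=\tmax$, and the extensibility criterion together with the resulting uniform bounds forces $\tmax=\infty$. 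Finally, $z$ is handled last: from the $z$-equation $z_t=\Delta z-\nabla\cdot(z\nabla w)+(f(w)-1)z$ and $0\le f(w)\le w\to 0$, the comparison principle against $\overline z(t)=\|z_0\|_{L^\infty}\exp(\int_0^t(\|f(w(\cdot,s))\|_{L^\infty}-1)\,ds)$ gives exponential decay of $\|z\|_{L^\infty}$ with no smallness needed on $z_0$, and then $\|\nabla w\|_{L^q}$, $\|w\|_{W^{1,q}}$, $\|v\|_{W^{1,q}}$ decay follows by one more application of semigroup smoothing. Summing the six decay estimates gives the claimed bound with a single $\gamma$ and $C$.

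The main obstacle I anticipate is the interdependence in the bootstrap: the decay of $u-\beta$ needs decay of $v$, $\nabla v$, but the decay of $v+\xi w$ needs $u$ close to $\beta$ (to make the reaction coefficient $(\beta-1)$-like and negative), and the $L^q$-gradient estimate for $v$ needs decay of $uv$ which again needs both. Untangling this requires choosing the thresholds in the right order — first an $\ep_1$-bound on $\|u-\beta\|_{L^\infty}$ guaranteeing the sign condition $\xi<\frac{u-1}{u}$ pointwise, then $\ep_0\ll\ep_1$ and $\gamma$ smaller than the spectral gaps of all the linear pieces — and carefully verifying that the constant $C$ in the final bound may depend on $\alpha$ while $\ep_0,\gamma$ do not (as the statement allows $\ep_0,\gamma,C$ to depend on $\alpha$, this is permissible, but one must make sure the smallness needed of $\|v_0+\xi w_0\|_{L^\infty}$ and $\|\nabla v_0\|_{L^q}$ does not degenerate as $\alpha$ grows). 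A secondary technical point is controlling the chemotactic drift $\nabla\cdot(u\nabla v)$ in the $L^\infty$-estimate for $u$, which I would handle via the $W^{1,q}\hookrightarrow L^\infty$ embedding ($q>n$) applied to $v$ together with the already-established $L^q$-decay of $\nabla v$ and an $L^\infty$-bound on $u$ from the $\ep_1$-threshold.
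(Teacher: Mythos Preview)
Your bootstrap for $(u,v,w,\nabla v)$ is essentially the paper's argument: define a maximal time on which $\|u-\beta\|_{L^\infty}$ stays below a decaying envelope, exploit the sign structure of $(v+\xi w)_t$ on that interval (valid because $u$ is close to $\beta$) to get exponential decay of $v$ and $w$, feed that into a semigroup estimate for $\nabla v$, and then re-derive the $u-\beta$ bound strictly inside the envelope, forcing $T=\tmax$. The paper's envelope $g(t)=\alpha e^{-t}+L\|\nabla v_0\|_{L^q}e^{-\lambda t}+\eta e^{-\gamma t}$ is more explicit than your thresholds, but the mechanism is the same.

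The genuine gap is your treatment of $z$. You claim that $\bar z(t)=\|z_0\|_{L^\infty}\exp\bigl(\int_0^t(\|f(w(\cdot,s))\|_{L^\infty}-1)\,ds\bigr)$ dominates $z$ by comparison. But for a spatially constant $\bar z$ one has $-\nabla\cdot(\bar z\nabla w)=-\bar z\,\Delta w$, so the supersolution inequality becomes $\bar z'\ge(f(w)-1-\Delta w)\bar z$ pointwise in $x$, and $\Delta w$ carries no sign; at any local maximum of $w$ the right-hand side can be arbitrarily large. The chemotactic drift is precisely what destroys $L^\infty$ comparison with constants---if your argument worked it would preclude aggregation in the classical Keller--Segel model. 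Since your $\nabla w$ estimate sits downstream of this $z$-bound, the chain does not close. The paper instead first obtains an $L^p$ bound on $z$ for some $p>\tfrac{qn}{q-n}$ by testing the $z$-equation against $z^{p-1}\varphi(w)$ with $\varphi(y)=(2b-y)^{-\ell}$, the Tao--Winkler consumption-type weight: once $\|w\|_{L^\infty}$ is small---which your $(v+\xi w)$ estimate does deliver---one can choose $\ell\in(0,p-1)$ and $b$ so that the cross terms are absorbed and Gagliardo--Nirenberg closes a differential inequality for $\int_\Omega z^p\varphi(w)$. Only then does semigroup smoothing give decay of $\|\nabla w\|_{L^q}$ (via $\|wz\|_{L^p}\le\|w\|_{L^\infty}\|z\|_{L^p}$), and a final Duhamel estimate on $z$, using $\|z\nabla w\|_{L^r}$ with $r=\tfrac{pq}{p+q}>n$, yields decay of $\|z\|_{L^\infty}$. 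The order $L^p(z)\to L^q(\nabla w)\to L^\infty(z)$ is essential and cannot be shortcut by a comparison argument.
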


\begin{remark}
``$\beta, \mu>0$ and \eqref{rep}'' is equivalent to 
``$\beta>1$ and $0<\mu < \frac{\beta-1}{\beta}$''.
\end{remark}

\noindent\textbf{Main ideas and plan of the paper.}
To determine a strategy we first recall what the previous work \cite{FLM3} did and what become difficulties in attaining our purpose. 
The previous work focused on the fact that 
the $(u,v)$- and $(z,w)$-subsystem in \eqref{P} share some characteristics with 
the chemotaxis-consumption system 
\begin{align*}
    n_t = \Delta n - \nabla \cdot (n\nabla c), \quad c_t = \Delta c -nc
\end{align*}
under homogeneous
Neumann boundary conditions and initial conditions, 
and showed that a modified energy functional enabled us to obtain existence of global classical/weak solutions to \eqref{OriginalProb} in 2-/3-dimensional setting. 
Here, one of the reason why the previous work could not attain boundedness of solutions is in the second equation in \eqref{P} 
\[
  v_t = \Delta v + v -uv +  \mu w,
\]
especially $+v$; since we could not control this term (even though there is the term $- uv$), 
we only obtained ``time-growth'' type estimates like 
$\int_\Omega v \lesssim e^{t}$, which could not yield global boundedness of solutions. 
Thus the start point of this study is come from a question: Could we control $+v$ by using $-uv$? 
Then, to give some answer to this question we try to apply the following idea: If the solution of \eqref{P} converges to the infection-free equilibrium $(\beta,0,0,0)$, especially if $u(\cdot,t)\to \beta$ ($t\to \infty$), then $+ v -uv  \to  - (\beta - 1) v$ ($t\to \infty$), 
which with $\beta >1$ might derive global estimates for the solution. 
To realize this idea mathematically, 
after collecting elementary results including existence of the local solution to \eqref{P} in Section \ref{Sec;Preli}, 
we introduce  
\[
    T := 
    \sup 
    \Big\{ \tau \in (0,\tmax) \ ;\ 
      \| u(\cdot,t) - \beta \|_{L^\infty(\Omega)} 
      \le g(t)  
      \quad \mbox{for all $t \in [0,\tau)$}
    \Big\} >0, 
\]
where $g$ is some function satisfying $g(t)\to 0$ $(t\to \infty)$.  
Section \ref{Sec;BddOfUVW} is devoted to showing $T=\tmax$; in Section \ref{SubSec;EstiVW} 
by applying the above idea
we first derive the estimate
\[
  \| v(\cdot,t)\|_{W^{1,q}(\Omega)} + \lp{\infty}{w(\cdot,t)} \le h_1(t) \quad \mbox{for all} \ t\in (0,T)
\]
with some function $h_1$ fulfilling 
$h_1(t) \to 0$ ($t\to \infty$), and then in Section \ref{Subsec;estiU} we see 
\[
  \lp{\infty}{u(\cdot,T) - \beta} < g(T) 
\]
under some smallness condition for the initial data, which entails that $T=\tmax$. 
Moreover, throughout Section \ref{Sec;BddOfZ} we prove
existence of a function $h_2$ satisfying 
\[
  \lp{\infty}{z(\cdot,t)}+ \lp{q}{\nabla w(\cdot,t)} \le h_2(t)
\]
and $h_2(t)\to 0$ ($t\to \infty$). These estimates derive Theorem \ref{thm}.  

%
\section{Preliminary}\label{Sec;Preli}

In this section we collect elementary results. 
We first state local existence of classical solutions 
to \eqref{P}, which is given by \cite[Lemma 3.1]{FLM3}. 

\begin{lem}\label{local}
Let $\Omega \subset \mathbb{R}^n$ $(n\ge2)$ 
be a smooth, bounded domain and let $\beta,\mu>0$.
Suppose that \eqref{f}
and \eqref{initial} hold for some $q>n$.
Then there exist $\tmax \in(0,\infty]$ and a uniquely determined solution
  \[
    (u,v,w,z)\in C^0([0,\tmax);C^0(\ol{\Omega}) \times W^{1,q}(\Omega) \times W^{1,q}(\Omega) \times C^0(\ol{\Omega}))
    \cap \big( C^{2,1}(\ol{\Omega}\times(0,\tmax)) \big)^4
  \]
of \eqref{P} which is nonnegative and has the property that 
  \begin{align}\label{criterion}
    \mbox{if $\tmax<\infty$}, \quad\mbox{then}\quad
    \limsup_{t\nearrow\tmax}\left( \|u(\cdot,t)\|_{L^\infty(\Omega)} + \|z(\cdot,t)\|_{L^\infty(\Omega)} \right) = \infty.
  \end{align}
\end{lem}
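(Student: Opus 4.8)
The plan is to construct the solution by a contraction-mapping argument in the mild (Duhamel) formulation, then upgrade to smoothness by parabolic bootstrapping, obtain nonnegativity from a maximum-principle argument, and derive the blow-up criterion \eqref{criterion} by a standard extensibility argument. Writing $(e^{t\Delta})_{t\ge0}$ for the Neumann heat semigroup on $\Omega$, I would recast \eqref{P} as the fixed-point system
\begin{align*}
u(t) &= e^{t\Delta}u_0 - \int_0^t e^{(t-s)\Delta}\nabla\!\cdot\!\big(u\nabla v\big)(s)\,ds - \int_0^t e^{(t-s)\Delta}\big(uv+u-\beta\big)(s)\,ds,\\
v(t) &= e^{t\Delta}v_0 + \int_0^t e^{(t-s)\Delta}\big(v-uv+\mu w\big)(s)\,ds,\\
w(t) &= e^{t\Delta}w_0 + \int_0^t e^{(t-s)\Delta}\big(uv-wz-w\big)(s)\,ds,\\
z(t) &= e^{t\Delta}z_0 - \int_0^t e^{(t-s)\Delta}\nabla\!\cdot\!\big(z\nabla w\big)(s)\,ds + \int_0^t e^{(t-s)\Delta}\big(f(w)z-z\big)(s)\,ds.
\end{align*}

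On a short interval $[0,T]$ I would work in $S_T := C^0([0,T];C^0(\ol\Omega)\times W^{1,q}(\Omega)\times W^{1,q}(\Omega)\times C^0(\ol\Omega))$ on a closed ball around $(u_0,v_0,w_0,z_0)$. Because $q>n$, the embedding $W^{1,q}(\Omega)\hookrightarrow C^0(\ol\Omega)$ renders every reaction term ($uv$, $\mu w$, $wz$, $f(w)z$, with $f\in C^1$ from \eqref{f}) a product of $C^0$ functions, hence locally Lipschitz. The delicate terms are the taxis integrals: using $\|e^{t\Delta}\nabla\!\cdot\!\phi\|_{L^\infty(\Omega)}\le C\big(1+t^{-\frac12-\frac{n}{2q}}\big)\|\phi\|_{L^q(\Omega)}$ with $\|u\nabla v\|_{L^q(\Omega)}\le\|u\|_{L^\infty(\Omega)}\|\nabla v\|_{L^q(\Omega)}$ (and likewise for $z\nabla w$), the integrands carry the singularity $(t-s)^{-\frac12-\frac{n}{2q}}$, whose exponent is $<1$ \emph{exactly} because $q>n$; the time integral therefore converges and is $O(T^{\delta})$ for some $\delta>0$. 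Combined with the gradient smoothing $\|\nabla e^{t\Delta}\psi\|_{L^q(\Omega)}\le C(1+t^{-1/2})\|\psi\|_{L^q(\Omega)}$ for the $(v,w)$-components, this shows the map preserves the ball and contracts for $T$ small, yielding a unique mild solution on a maximal interval $[0,\tmax)$.

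Interior smoothness on $(0,\tmax)$ follows by a routine bootstrap: once the mild solution is shown to be Hölder continuous, applying parabolic $L^p$ and Schauder estimates to each equation (now linear with Hölder coefficients) and iterating lifts regularity to $C^\infty(\ol\Omega\times(0,\tmax))$, while continuity up to $t=0$ in the stated spaces is inherited from the Duhamel representation. For nonnegativity I would invoke the maximum principle via the quasi-positive structure: on $\{u=0\}$ the flux $u\nabla v$ vanishes and the reaction reduces to $\Delta u+\beta\ge\Delta u$; on $\{v=0\},\{w=0\},\{z=0\}$ the surviving sources $\mu w$, $uv$, $0$ are nonnegative provided the other components are, so an iteration (or the standard comparison result for cooperative systems) keeps $(u,v,w,z)$ in the positive cone.

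The blow-up criterion \eqref{criterion} is where the real work lies, and its sharpness — that only $\|u\|_{L^\infty(\Omega)}$ and $\|z\|_{L^\infty(\Omega)}$ enter — is the step I expect to cost the most. Suppose $\tmax<\infty$ yet these two quantities stay bounded on $[0,\tmax)$. Then, freezing $u$ and $z$ as bounded coefficients, the middle pair $(v,w)$ solves a linear parabolic system with bounded zeroth-order terms, and a Duhamel/Gronwall estimate bounds $\|v\|_{W^{1,q}(\Omega)}+\|w\|_{W^{1,q}(\Omega)}$ uniformly on $[0,\tmax)$; in particular $\nabla v,\nabla w$ remain bounded in $L^q(\Omega)$. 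Reinserting these into the $u$- and $z$-equations via the same taxis estimate bounds $\|u\|_{L^\infty(\Omega)}+\|z\|_{L^\infty(\Omega)}$ away from $\tmax$ as well, so all four components extend continuously to $t=\tmax$ in their norms; re-solving from $t=\tmax$ through the local construction contradicts maximality. The crux is to verify that the $(v,w)$-bound depends only on the $L^\infty$ bounds of $u,z$ and not circularly on $\nabla v,\nabla w$, which is precisely what makes the criterion rest on $u$ and $z$ alone.
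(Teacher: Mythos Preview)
Your argument is correct and is the standard route for such quasilinear chemotaxis systems: Banach fixed point in the mild formulation on $C^0\times W^{1,q}\times W^{1,q}\times C^0$ (the integrability of $(t-s)^{-\frac12-\frac{n}{2q}}$ being guaranteed precisely by $q>n$), parabolic bootstrap to $C^\infty$, quasi-positivity for nonnegativity, and the extensibility argument in which boundedness of $\|u\|_{L^\infty}$ and $\|z\|_{L^\infty}$ makes the $(v,w)$-subsystem linear with bounded coefficients, yielding $W^{1,q}$ bounds that close the loop. The only point worth tightening is the nonnegativity step: since the system is not fully cooperative in the classical sense (the $-uv$ and $-wz$ terms have the wrong sign), you should argue componentwise rather than invoke a cooperative-system comparison---but your pointwise observation that on each set $\{u=0\},\{v=0\},\{w=0\},\{z=0\}$ the remaining source is nonnegative is exactly what is needed, and a standard Stampacchia or $(\cdot)_-$ testing argument makes this rigorous.

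The paper itself does not prove this lemma at all: it simply records the statement and attributes it to \cite[Lemma~3.1]{FLM3}. So there is no ``paper's own proof'' to compare against here; your sketch supplies what the paper outsources, and it does so along the expected lines (indeed, the cited reference almost certainly proceeds in the same way).
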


The previous work \cite{FLM3} has already shown that  the unique classical solution exists globally in the case that $n=2$. 
\begin{corollary}[{\cite[Theorem 1.1]{FLM3}}]\label{cor2d}
  In the case that $n=2$ we have $\tmax = \infty$.  
\end{corollary}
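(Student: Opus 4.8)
This statement coincides with \cite[Theorem 1.1]{FLM3}; I indicate the structure of the argument for completeness. By the extinction criterion \eqref{criterion} it suffices to prove that on every finite subinterval $[0,T]\subset[0,\tmax)$ the quantity $\lp{\infty}{u(\cdot,t)}+\lp{\infty}{z(\cdot,t)}$ stays bounded; this rules out finite-time blow-up and forces $\tmax=\infty$. The key point is that the planar case $n=2$ is dimensionally critical for the chemotaxis-consumption structure carried by the $(u,v)$- and $(z,w)$-subsystems of \eqref{P}.

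First I would record the elementary integral identities obtained by testing each equation with $1$. The $u$-equation yields $\frac{d}{dt}\int_\Omega u = \beta|\Omega| - \int_\Omega u - \int_\Omega uv \le \beta|\Omega| - \int_\Omega u$, hence a uniform-in-time $L^1$-bound on $u$. Integrating the $v$- and $w$-equations produces at worst exponential-in-time control $\int_\Omega v + \int_\Omega w \lesssim e^{Ct}$, which is finite on $[0,T]$ and therefore sufficient here; note that the obstructive $+v$ term flagged in the introduction is harmless for mere global existence precisely because exponential growth is admissible on bounded intervals.

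The heart of the matter is to upgrade these bounds to higher integrability of $u$. For the $(u,v)$-pair I would employ a modified energy functional of the form
\[
  \mathcal{E}(t) = \int_\Omega u\ln u + \frac{\kappa}{2}\int_\Omega \frac{|\nabla v|^2}{v}
\]
with a suitably small $\kappa>0$, differentiate it along the flow, and absorb the contributions of the reaction terms $-uv-u+\beta$ and $+v+\mu w$ using the mass bounds above together with the two-dimensional Gagliardo--Nirenberg inequality. In $n=2$ the dissipation accompanying $\int_\Omega u\ln u$ is exactly critical, and a Gronwall argument then yields an $L^2$-bound on $u$ over $[0,T]$, which is enough integrability to initiate a bootstrap. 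That bootstrap finishes the $(u,v)$-part: smoothing estimates for the Neumann heat semigroup applied to the $v$-equation give $\lp{\infty}{\nabla v}$, after which treating $-\nabla\cdot(u\nabla v)$ as a known forcing in the $u$-equation upgrades $u$ to $L^\infty$ on $[0,T]$.

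The $(z,w)$-pair is treated analogously. Once $u$ and $v$ are controlled, the $w$-equation is uniformly parabolic with the now-bounded source $uv$, so $w$ is controlled on $[0,T]$; the $z$-equation is then a planar chemotaxis equation with a prescribed chemoattractant $w$, and the same entropy-plus-bootstrap scheme yields $\lp{\infty}{z}$ on $[0,T]$. Combining the two $L^\infty$-bounds contradicts \eqref{criterion} unless $\tmax=\infty$. The main obstacle is the construction of the modified functional: unlike the pure consumption model, here $v$ is simultaneously consumed through $-uv$ and produced through $+v$ and $+\mu w$, so the sign-definite dissipation that normally drives the estimate is partly lost, and one must choose $\kappa$ carefully and exploit the critical two-dimensional embedding to close the estimate without generating unabsorbable terms.
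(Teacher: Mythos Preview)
The paper itself gives no proof of this corollary: it is stated as a direct citation of \cite[Theorem 1.1]{FLM3} and is used later (in Corollary~\ref{cor}) as a black box. Your sketch is therefore not being compared against any argument in the present paper, but against the cited source. That said, your outline is consistent with what the paper's introduction says about the method of \cite{FLM3}: the $(u,v)$- and $(z,w)$-subsystems are treated as perturbations of the chemotaxis-consumption model, and a modified energy functional (in the spirit of \cite{TW2012_consumption}) is used to close the estimates in the critical dimension $n=2$. The entropy functional you write down, the remark that the $+v$ term merely produces harmless exponential growth on finite time intervals, and the bootstrap via semigroup estimates all match this template. So your proposal is an accurate summary of the approach underlying the cited result, even though the present paper does not reproduce it.
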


We next give a basic property of solutions to \eqref{P}.

\begin{lem}\label{mass}
Let $\Omega \subset \mathbb{R}^n$ $(n\ge2)$
be a smooth, bounded domain and let $\beta,\mu>0$.
Suppose \eqref{f} and \eqref{initial} for some $q>n$. 
Then 
  \[
    \| u+w+z \|_{L^1(\Omega)}
    \le \| u_0+w_0+z_0 \|_{L^1(\Omega)} + \beta |\Omega|
    =:\mass
    \quad\mbox{on $[0,\tmax)$}.
  \]
\end{lem}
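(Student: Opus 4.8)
The plan is to test the first, third and fourth equations of \eqref{P} against the constant $1$, add them, and integrate over $\Omega$. First I would record that, by Lemma \ref{local}, the solution is smooth on $\ol{\Omega}\times(0,\tmax)$, continuous on $[0,\tmax)$ with values in the stated spaces, and nonnegative; in particular $u,w,z\ge0$, so that $\|u+w+z\|_{L^1(\Omega)}=\int_\Omega(u+w+z)$. Because of the homogeneous Neumann boundary conditions we have $\int_\Omega\Delta u=\int_\Omega\Delta w=\int_\Omega\Delta z=0$ and $\int_\Omega\nabla\cdot(u\nabla v)=\int_\Omega\nabla\cdot(z\nabla w)=0$, so summing the three equations and integrating yields
\[
  \frac{d}{dt}\int_\Omega(u+w+z)
  =\int_\Omega\bigl(-uv-u+\beta+uv-wz-w+f(w)z-z\bigr)
\]
on $(0,\tmax)$.

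The next step is to simplify the right-hand side. The chemotaxis–consumption contributions cancel, since $-uv+uv=0$, and because $0\le f(s)\le s$ by \eqref{f} and $z\ge0$ we have $f(w)z-wz\le0$; hence
\[
  \frac{d}{dt}\int_\Omega(u+w+z)\le-\int_\Omega(u+w+z)+\beta|\Omega|
\]
on $(0,\tmax)$. Setting $y(t):=\int_\Omega(u+w+z)$, this linear differential inequality gives, via the integrating factor $e^{t}$ (equivalently, an ODE comparison argument), $y(t)\le y(0)e^{-t}+\beta|\Omega|(1-e^{-t})\le y(0)+\beta|\Omega|$ for every $t\in[0,\tmax)$. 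Since $y(0)=\|u_0+w_0+z_0\|_{L^1(\Omega)}$, this is exactly the asserted bound by $\mass$.

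There is no genuine obstacle here; the only points requiring a little care are verifying that the boundary integrals vanish under the Neumann conditions and that the regularity provided by Lemma \ref{local} (including continuity up to $t=0$) makes both the integration by parts and the Gronwall-type step legitimate. Conceptually, the estimate hinges on the single structural observation that the only reaction term capable of breaking an $L^1$ bound, namely $f(w)z$, is absorbed by $-wz$ precisely because of the hypothesis $f(s)\le s$.
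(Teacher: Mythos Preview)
Your argument is correct and matches the paper's proof essentially line for line: the paper also adds the first, third and fourth equations, invokes \eqref{f} to obtain $\frac{d}{dt}\int_\Omega(u+w+z)\le-\int_\Omega(u+w+z)+\beta|\Omega|$, and concludes by the same ODE comparison. Your write-up just makes the boundary-term and regularity verifications more explicit than the paper bothers to.
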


\begin{proof}
From the first, third and fourth equations in \eqref{P}
and the relation \eqref{f},
we have
  \[
    \frac{d}{dt} \int_\Omega (u+w+z)
    \le - \int_\Omega (u+w+z) + \beta |\Omega|
    \quad\mbox{on $(0,\tmax)$},
  \]
which leads to the conclusion of this lemma.
\end{proof}

Next we recall $L^p$-$L^q$ estimates for the 
Neumann heat semigroup (see \cite[Lemma 1.3]{W_2010} and \cite[Lemma 2.1]{CL_2016}). 
\begin{lem}\label{semi}
Let $(e^{t\Delta})_{t\ge0}$ be the Neumann heat semigroup 
in $\Omega$, and let $\lambda>0$ denote the 
first nonzero eigenvalue of $-\Delta$ in $\Omega$
under Neumann boundary condition.
Then there are $\Ko,\Ktw,\Kth,\Kf >0$ which only depend on $\Omega$ and which have the following properties\/{\rm :} 
\begin{enumerate}
  \item If $1\le q\le p \le \infty$, then 
  \[  
    \lp{p}{e^{t\Delta} \phi} \le 
        \Ko (1+ t^{-\frac n2 (\frac 1q - \frac 1p)}) e^{-\lambda t}\lp{q}{\phi} 
    \quad 
    \mbox{for all} \ t>0 
  \]
  holds for all $\phi\in L^q (\Omega)$ with $\int_\Omega \phi = 0$. 
  \item  If $1\le q\le p \le \infty$, then 
  \[
    \lp{p}{\nabla e^{t\Delta} \phi} \le 
      \Ktw (1+ t^{-\frac 12 -\frac n2 (\frac 1q - \frac 1p)}) e^{-\lambda t}\lp{q}{\phi} 
    \quad 
    \mbox{for all} \ t>0 
  \]
  holds for all $\phi\in L^q (\Omega)$. 
  \item If $2\le q\le p \le \infty$, then 
  \[
    \lp{p}{\nabla e^{t\Delta} \phi} \le 
      \Kth (1+ t^{-\frac n2 (\frac 1q - \frac 1p)}) e^{-\lambda t}\lp{q}{\nabla \phi} 
    \quad 
    \mbox{for all} \ t>0 
  \]
  holds for all $\phi\in W^{1,q} (\Omega)$. 
  \item If $1 <  q \le p \le \infty$, then 
  \[
    \lp{p}{e^{t\Delta} \nabla \cdot \varphi} \le 
      \Kf (1+ t^{-\frac 12 -\frac n2 (\frac 1q - \frac 1p)}) e^{-\lambda t}\lp{q}{\varphi} 
    \quad 
    \mbox{for all} \ t>0 
  \]
  holds for all $\varphi\in (W^{1,q} (\Omega))^n$.
\end{enumerate}
\end{lem}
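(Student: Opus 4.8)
The plan is to reduce all four estimates to two independent ingredients: Gaussian upper bounds for the Neumann heat kernel on $\Omega$ (short-time smoothing) and the spectral gap of $-\Delta$ on the orthogonal complement of the constants (long-time decay). I would write $(e^{t\Delta}\phi)(x) = \int_\Omega G(x,y,t)\phi(y)\,dy$ with the Neumann heat kernel $G$, and recall the classical bounds $0 \le G(x,y,t) \le c_1 t^{-n/2}\exp(-|x-y|^2/(c_1 t))$ and $|\nabla_x G(x,y,t)| \le c_2 t^{-(n+1)/2}\exp(-|x-y|^2/(c_2 t))$, valid for $t \in (0,1]$. Let $0 = \mu_0 < \mu_1 = \lambda \le \mu_2 \le \cdots$ be the Neumann eigenvalues with $L^2$-orthonormal eigenfunctions $\{\psi_k\}$, $\psi_0$ constant, and let $P\phi := \phi - \frac{1}{|\Omega|}\int_\Omega\phi$ be the projection onto the mean-zero subspace $L^2_0$, on which $-\Delta$ has spectrum in $[\lambda,\infty)$.

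For the first estimate I would argue separately on $(0,1]$ and $[1,\infty)$. On short times, Young's inequality applied to the Gaussian bound yields $\lp{p}{e^{t\Delta}\phi} \le C t^{-\frac n2(\frac1q-\frac1p)}\lp{q}{\phi}$ for $1\le q\le p\le\infty$, with no use of the mean-zero hypothesis. On long times I would exploit that $e^{t\Delta}$ preserves the mean, so for mean-zero $\phi$ the factorization $e^{t\Delta}\phi = e^{\frac14\Delta}\big(e^{(t-\frac12)\Delta}(e^{\frac14\Delta}\phi)\big)$ sends $L^q \to L^2_0 \to L^2_0 \to L^p$, where the two outer factors are bounded by the short-time smoothing and the middle factor contracts by $e^{-\lambda(t-\frac12)}$ thanks to the spectral gap; this gives $\lp{p}{e^{t\Delta}\phi}\le C e^{-\lambda t}\lp{q}{\phi}$ for $t\ge1$. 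Merging the two ranges produces the stated form $\Ko(1+t^{-\frac n2(\frac1q-\frac1p)})e^{-\lambda t}$.

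The remaining three estimates I would deduce from this scheme. For the gradient estimate, since $\nabla e^{t\Delta}$ annihilates constants, $\nabla e^{t\Delta}\phi = \nabla e^{t\Delta}(P\phi)$ and $\lp{q}{P\phi}\le C\lp{q}{\phi}$, so the mean-zero hypothesis is not needed; the short-time bound now uses the gradient kernel $|\nabla_x G|$ (producing the extra factor $t^{-1/2}$) and the long-time bound again factorizes through $L^2_0$. The divergence estimate I would obtain by duality: testing $e^{t\Delta}\nabla\cdot\varphi$ against $\psi$ and integrating by parts moves the problem to $\lp{q'}{\nabla e^{t\Delta}\psi}$, to which the gradient estimate applies; the constraint $q>1$ is exactly what makes the dual exponent $q'$ finite, and the resulting power matches $\frac12+\frac n2(\frac1q-\frac1p)$. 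The gradient-to-gradient estimate is the odd one out: I would establish its long-time part from the Dirichlet-form identity $\int_\Omega\nabla\psi_j\cdot\nabla\psi_k = \mu_k\delta_{jk}$, which yields $\lp{2}{\nabla e^{t\Delta}\phi}\le e^{-\lambda t}\lp{2}{\nabla\phi}$, and combine it with short-time gradient bounds; the Hilbert-space nature of this identity is what forces the restriction $q\ge2$.

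The hard part will be the analytic input rather than the bookkeeping: the Gaussian upper bounds for $G$ and especially for its spatial gradient $\nabla_x G$ up to the boundary are the substantive ingredient, resting on parabolic boundary regularity (or a local reflection argument) for the Neumann problem, and these are what I would cite from the literature rather than reprove. The secondary difficulty is the gradient-to-gradient estimate, which does not follow from the others by pure duality and genuinely needs the energy identity together with the observation that the Neumann condition makes the boundary terms vanish; handling the endpoint $p=\infty$ there and in the other parts also requires the usual composition trick $e^{t\Delta}=e^{\frac t2\Delta}e^{\frac t2\Delta}$ to trade an $L^\infty$ target for an intermediate $L^r$ norm.
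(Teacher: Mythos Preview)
The paper does not prove this lemma at all: it is stated as a recalled result with the parenthetical citation ``(see \cite[Lemma 1.3]{W_2010} and \cite[Lemma 2.1]{CL_2016})'' and no proof is given. So there is nothing to compare against in the paper itself.

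Your sketch is essentially the standard proof that appears in those references (in particular Winkler's 2010 paper for parts (i)--(iii) and the extension to (iv)). The short-time Gaussian bounds plus long-time spectral-gap factorization through $L^2_0$ is exactly the right architecture, the duality argument for (iv) is the usual one, and you correctly flag that the gradient-to-gradient estimate (iii) needs the Dirichlet-form identity rather than pure duality, which explains the restriction $q\ge 2$. You also correctly identify the genuine analytic input---the pointwise Gaussian bounds on $G$ and $\nabla_x G$ up to the boundary---as something to cite rather than reprove. In short, your proposal is sound and matches the approach in the literature the paper defers to; for the purposes of this paper you could simply cite those references as the authors do.
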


\section{Decay estimates for {\boldmath $v,\nabla v,w$} and {\boldmath $u-\beta$}}\label{Sec;BddOfUVW}

Throughout the sequel, we fix 
$\beta ,\mu>0$ with \eqref{rep} (which is equivalent to 
$\beta>1$ and $0<\mu < \frac{\beta-1}{\beta}$), and let $\eta >0$ and $q >n$ as well as $\alpha>0$.
Moreover, without further explicit mentioning, we assume \eqref{f} and \eqref{initial} with $\alpha = \| u_0 - \beta \|_{L^\infty(\Omega)}$, and $(u,v,w,z)$ denotes the corresponding local classical solution in \eqref{P} given by Lemma \ref{local}.
We let 
  \begin{align}\label{dxg}
    \xi \in \left( \mu, \frac{\beta-1}\beta \right),
  \quad 
    \delta < (1-\xi)\beta-1
  \quad \mbox{and}\quad 
    \gamma < \min\left\{ \delta, 1 - \frac\mu\xi, \lambda \right\}
  \end{align}
%
and put
  \begin{align}\label{T}
    T := 
    \sup 
    \Big\{ \tau \in (0,\tmax) \ ;\ 
      \| u(\cdot,t) - \beta \|_{L^\infty(\Omega)} 
      \le g(t)  
      \quad \mbox{for all $t \in [0,\tau)$}
    \Big\}, 
  \end{align}
where 
\[
g(t) := \alpha e^{-t} + \cKlam \| \nabla v_0 \|_{L^q(\Omega)} e^{-\lambda t} + \eta e^{-\gamma t}
\]
and
\begin{align}\label{cKlam}
\cKlam = \cKlam (\alpha,\beta,\eta,q,\Omega) := K_3 K_4 (\alpha + \beta + \eta) \int^\infty_0 (1+\sigma^{-\frac12-\frac{n}{2q}})e^{-\sigma} \,d\sigma < \infty
\end{align}
as well as $\lambda, K_3, K_4>0$ are constants given in Lemma \ref{semi}. Here, since $\| u(\cdot,0) - \beta \|_{L^\infty(\Omega)} = \alpha  < \alpha + \cKlam \| \nabla v_0 \|_{L^q(\Omega)} + \eta$ holds, we note that $T$ is well-defined and $T\in (0,\tmax]$. 
In this section, we shall show $T=\tmax$ 
to obtain boundedness and decay estimates for $v, \nabla v, w$ and $u-\beta$.  

\subsection{Estimates for {\boldmath $v,\nabla v,w$} on {\boldmath $[0,T)$}}\label{SubSec;EstiVW}

In this subsection we will derive decay estimates for $v,\nabla v,w$ on $[0,T)$. 
We first show estimates for $v,w$, which is actually an essential point in the proof, according to the following idea: 
for $\xi >0$ the function $v+ \xi w$ satisfies that 
\[
  (v+\xi w)_t = \Delta (v+\xi w) - ((1-\xi) u - 1) v - (\xi - \mu)w - \xi wz,
\]
which with the conjecture $u(\cdot,t)\to \beta$ ($t\to \infty$) might entail that, if 
both $(1-\xi) \beta -1 >0$ and $\xi - \mu >0$ hold (which is equivalent to $\xi \in (\mu,\frac{\beta -1}{\beta})$), then we can obtain uniform-in-time boundedness of $v +\xi w$. 

\begin{lem}\label{full:Linf:v}
Let $\xi, \delta, \gamma>0$ satisfy \eqref{dxg}.
Moreover, assume that 
$\|\nabla v_0\|_{L^q(\Omega)}\le 1$.
Then 
  \begin{align}\label{esti:vw}
    \| v(\cdot,t) + \cmu w(\cdot,t) \|_{L^\infty(\Omega)} 
    \le \| v_0 + \cmu w_0 \|_{L^\infty(\Omega)} 
         e^{(1+\gamma)\ctime}  
         e^{-\gamma t}
    \quad\mbox{for all $t\in[0,T)$},
  \end{align}
where
\[
  \ctime := 
  \begin{cases}
      0 
    & \mbox{if} \ \alpha + \cKlam + \eta \le \beta - \frac{1+\delta}{1-\xi},
    \\
      \frac1{\gamma} 
      \log \frac{\alpha + \cKlam + \eta}{\beta - \frac{1+\delta}{1-\xi}} 
    &\mbox{if} \ \alpha + \cKlam + \eta > \beta -\frac{1+\delta}{1-\xi}.
  \end{cases}
\]
\end{lem}

\begin{remark}
We can actually choose 
$\xi,\delta$ and $\gamma$ satisfying \eqref{dxg}.
Indeed, the interval $(\mu, \frac{\beta-1}{\beta})$
is not empty
since $\beta,\mu>0$ with \eqref{rep} meas $\beta>1$ and $\mu<\frac{\beta-1}\beta$; 
also, the relations $\xi < \frac{\beta-1}{\beta}$ and $\xi>\mu$ 
are equivalent to $(1-\xi)\beta-1>0$
and $1-\frac\mu\xi>0$, respectively, 
which ensures existence of $\gamma$ and $\delta$ in \eqref{dxg}.
Moreover, we note that the second relation in \eqref{dxg} guarantees
that $\beta - \frac{1+\delta}{1-\xi}>0$.
\end{remark}

\begin{proof}
From the second and third equations in \eqref{P}
and the fact that $\mu < \xi <\frac{\beta-1}{\beta}< 1$, 
  \begin{align*}
    (v+\xi w)_t
    &=\Delta (v+\xi w) + (v + \xi w) - \xi w - (1-\xi) uv - (\xi- \mu) w - \xi wz
    \\
    &\le \Delta (v+\xi w) + (v+\xi w) 
    \quad\mbox{on $(0,\tmax)$}, 
  \end{align*}
which implies that 
  \begin{align}\label{short}
    \| v(\cdot,t)+\xi w(\cdot,t) \|_{L^\infty(\Omega)}
    \le \| v_0+\xi w_0 \|_{L^\infty(\Omega)} e^t
    \quad\mbox{for all $t\in[0,\tmax)$}.
  \end{align}
Because $e^t = e^{(1+\gamma)t-\gamma t} \le e^{(1+\gamma)\ctime} e^{-\gamma t}$ for all $t\in [0,T)\cap[0,\ctime)$, 
we have
  \[
    \| v(\cdot,t) + \cmu w(\cdot,t) \|_{L^\infty(\Omega)} 
    \le \| v_0 + \cmu w_0 \|_{L^\infty(\Omega)} 
         e^{(1+\gamma)\ctime}  
         e^{-\gamma t}
     \quad\mbox{for all $t\in [0,T)\cap[0,\ctime)$}.
  \]
If $\ctime\ge T$, then this inequality directly  derives \eqref{esti:vw}.
Therefore we next consider the case $\ctime<T$.
In light of the condition that 
$\|\nabla v_0\|_{L^q(\Omega)}\le1$ 
and the definition of $\ctime$ we have 
  \[
    \alpha e^{-\ctime} +  \cKlam  \| \nabla v_0 \|_{L^q(\Omega)} e^{-\lambda \ctime} + \eta e^{-\gamma \ctime}
    \le (\alpha  +  \cKlam  + \eta)e^{-\gamma \ctime}
    \le \beta - \frac{1+\delta}{1-\xi}.
  \]
Then we see from the definition of $T$ in \eqref{T} that 
$\| u - \beta \|_{L^\infty(\Omega)} \le \beta - \frac{1+\delta}{1-\xi}$ on $[\ctime,T)$, 
which ensures that
  \begin{align}\label{lower:u}
    \frac{1+\delta}{1-\xi} \le u
    \quad\mbox{on $[\ctime,T)$}.
  \end{align}
From the second and third equations in \eqref{P} 
and \eqref{dxg} as well as \eqref{lower:u} it follows that 
  \begin{align*}
    (v+\xi w)_t
    &\le \Delta (v+\xi w) + v - (1-\xi)uv - (\xi-\mu)w
    \\
    &\le \Delta (v+\xi w) - \delta v - (\xi-\mu) w
    \\
    &\le \Delta (v+\xi w) - \gamma (v + \xi w)
    \quad\mbox{on $[\ctime,T)$},
  \end{align*}
and hence, by virtue of semigroup estimates in Lemma \ref{semi} 
we observe that
  \begin{align}\label{long}
    \|v(\cdot,t)+\xi w(\cdot,t)\|_{L^\infty(\Omega)}
    \le \|v(\cdot,\ctime)+\xi w(\cdot,\ctime)\|_{L^\infty(\Omega)}
         e^{-\gamma(t-\ctime)}
    \quad\mbox{for all $t\in[\ctime,T)$}.
  \end{align}
Thus inserting \eqref{short} with $t=\ctime$ into 
\eqref{long} leads to \eqref{esti:vw} on $[\ctime,T)$. 
\end{proof}

Lemma \ref{full:Linf:v} enables us to see a decay estimate for $\nabla v$ on $[0,T)$. 

\begin{lem}\label{full:Linf:nabv}
Let $\xi, \delta, \gamma>0$ satisfy \eqref{dxg}. 
Moreover, assume that 
$\|\nabla v_0\|_{L^q(\Omega)}\le 1$.
Then 
  \[
    \| \nabla v(\cdot,t) \|_{L^q(\Omega)}
    \le \Kth \| \nabla v_0 \|_{L^q(\Omega)} e^{-\lambda t}
           + C_1 \| v_0 + \cmu w_0 \|_{L^\infty(\Omega)}  e^{-\gamma t} 
    \quad \mbox{for all $t\in[0,T)$},
  \]
where 
  \begin{align*}
    \cCwt & := \Ktw S_1 |\Omega|^{\frac 1q} 
              \left(M+\frac{\mu}{\xi}\right) 
              e^{(1+\gamma)\ctime}, 
    \quad  
    M := \alpha + \beta +\eta + \cKlam  \| \nabla v_0 \|_{L^q(\Omega)} -1,  
  \\
    S_1& := \int^\infty_0 (1+\sigma^{-\frac12})e^{-(\lambda-\gamma) \sigma} \,d\sigma.
  \end{align*}
\end{lem}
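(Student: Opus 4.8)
The plan is to apply the variation-of-constants formula to the $v$-equation and combine it with the $L^\infty$-decay estimate for $v+\xi w$ already obtained in Lemma~\ref{full:Linf:v}. Writing $v(\cdot,t) = e^{t\Delta}v_0 + \int_0^t e^{(t-s)\Delta}\big(v(\cdot,s) - u(\cdot,s)v(\cdot,s) + \mu w(\cdot,s)\big)\,ds$ and applying $\nabla$, I would estimate $\|\nabla v(\cdot,t)\|_{L^q(\Omega)}$ by splitting into the homogeneous contribution $\|\nabla e^{t\Delta}v_0\|_{L^q}$, handled by part~(iii) of Lemma~\ref{semi} to produce the term $\Kth\|\nabla v_0\|_{L^q}e^{-\lambda t}$, and the inhomogeneous Duhamel term, handled by part~(ii) of Lemma~\ref{semi} applied to the forcing $F := v - uv + \mu w$.

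For the Duhamel term the key is to bound $\|F(\cdot,s)\|_{L^1(\Omega)}$ (using the $L^1\to L^q$ gradient estimate from part~(ii) with $p=q$, $r=1$, which is integrable near $s=0$ since the exponent $\tfrac12+\tfrac n2(1-\tfrac1q)$ is compensated by choosing the $L^1$ norm — wait, more carefully, part~(ii) with $q=1$, $p=q$ gives exponent $\tfrac12+\tfrac n2(1-\tfrac1q)$ which is $\ge 1$ when $n\ge 2$, so instead one uses an $L^q\to L^q$ estimate with exponent $\tfrac12$). So I would rather estimate $\|F(\cdot,s)\|_{L^q(\Omega)}$. Here is where Lemma~\ref{full:Linf:v} enters: since $v \le v + \xi w \le \|v_0+\xi w_0\|_{L^\infty}e^{(1+\gamma)\ctime}e^{-\gamma s}$ and similarly $\xi w$ is controlled, and since $|v - uv| \le (1 + \|u\|_{L^\infty})v \le (1 + \beta + M')v$ for an appropriate bound $M'$ on $\|u-\beta\|_{L^\infty}$ coming from the definition of $T$, we get $\|F(\cdot,s)\|_{L^q(\Omega)} \le |\Omega|^{1/q}\big(M + \tfrac{\mu}{\xi}\big)\|v_0+\xi w_0\|_{L^\infty}e^{(1+\gamma)\ctime}e^{-\gamma s}$, with $M$ as defined in the statement absorbing the coefficient $1 + \|u\|_{L^\infty} \le \alpha + \beta + \eta + \cKlam\|\nabla v_0\|_{L^q} = M+1$ (valid on $[0,T)$ by \eqref{T}). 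Plugging into Duhamel and using part~(ii) of Lemma~\ref{semi}:
\[
  \int_0^t \Ktw\big(1 + (t-s)^{-\frac12}\big)e^{-\lambda(t-s)}\|F(\cdot,s)\|_{L^q(\Omega)}\,ds
  \le \Ktw |\Omega|^{\frac1q}\Big(M+\tfrac{\mu}{\xi}\Big)\|v_0+\xi w_0\|_{L^\infty(\Omega)}e^{(1+\gamma)\ctime}\int_0^t\big(1+(t-s)^{-\frac12}\big)e^{-\lambda(t-s)}e^{-\gamma s}\,ds.
\]
Finally I would factor out $e^{-\gamma t}$ from the convolution integral, writing $e^{-\lambda(t-s)}e^{-\gamma s} = e^{-\gamma t}e^{-(\lambda-\gamma)(t-s)}$ (legitimate since $\gamma<\lambda$ by \eqref{dxg}), so that the remaining integral is bounded by $S_1 = \int_0^\infty(1+\sigma^{-1/2})e^{-(\lambda-\gamma)\sigma}\,d\sigma < \infty$. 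Collecting constants gives exactly $\cCwt = \Ktw S_1 |\Omega|^{1/q}(M+\tfrac{\mu}{\xi})e^{(1+\gamma)\ctime}$ and the claimed estimate.

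The main obstacle is the bookkeeping around the constant $M$: one must verify that the $L^\infty$-bound on $u$ used to control $1 + \|u\|_{L^\infty}$ is available uniformly on $[0,T)$, which follows from the definition of $T$ in \eqref{T} since $g(t) \le \alpha + \cKlam\|\nabla v_0\|_{L^q} + \eta$ for all $t$, together with correctly handling the early-time ($s < \ctime$) regime where Lemma~\ref{full:Linf:v} only gives the $e^{(1+\gamma)\ctime}e^{-\gamma s}$ bound rather than pure exponential decay — but this is already baked into the statement of Lemma~\ref{full:Linf:v} and so causes no extra trouble. Checking that the convolution exponent $\tfrac12$ is integrable at $s=t$ (true since $\tfrac12 < 1$) and that $\lambda - \gamma > 0$ are the remaining routine points.
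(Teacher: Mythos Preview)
Your approach matches the paper's: variation of constants for $v$, parts (iii) and (ii) of Lemma~\ref{semi} for the homogeneous and Duhamel pieces respectively, Lemma~\ref{full:Linf:v} to control the forcing in $L^\infty$ (hence $L^q$ with the factor $|\Omega|^{1/q}$), and the convolution identity $e^{-\lambda(t-s)}e^{-\gamma s}=e^{-\gamma t}e^{-(\lambda-\gamma)(t-s)}$ to extract $S_1e^{-\gamma t}$.

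The one slip is in the constant tracking: your triangle-inequality bound $|v-uv|\le(1+\|u\|_{L^\infty})v$ produces coefficient $M+1$, not $M$, so you would not recover exactly $\cCwt$ as stated. The paper instead writes the forcing as $(1-u)v+\mu w$ and bounds $\|u(\cdot,t)-1\|_{L^\infty}\le \|u(\cdot,t)-\beta\|_{L^\infty}+(\beta-1)\le \alpha+\cKlam\|\nabla v_0\|_{L^q}+\eta+\beta-1=M$ directly on $[0,T)$ from \eqref{T}; this is \eqref{u-1}, and it gives precisely the coefficient $M$ (and hence $\cCwt$) in the statement.
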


\begin{proof}
By virtue of the definition of $T$ in \eqref{T} 
and the fact that $\beta > 1$ we obtain
  \begin{align}\label{u-1}
    \| u(\cdot,t) - 1 \|_{L^\infty(\Omega)} 
    &\le \lp{\infty}{u(\cdot,t)-\beta} + \lp{\infty}{\beta -1}
\notag \\
    &\le \alpha 
      +\cKlam \| \nabla v_0 \|_{L^q(\Omega)} 
      + \eta 
      + \beta - 1 
    = M
    \quad \mbox{for all $t\in[0,T)$}.
  \end{align}
The second equation in \eqref{P} and Lemmas \ref{semi}, \ref{full:Linf:v} as well as \eqref{u-1}
entail that
  \begin{align*}
    &\| \nabla v(\cdot,t) \|_{L^q(\Omega)}
    \\
    &\le \| \nabla e^{t\Delta} v_0 \|_{L^q(\Omega)}
           + \int^t_0 \| \nabla e^{(t-s)\Delta} [ (1-u)v + \mu w] \|_{L^q(\Omega)} \,ds 
    \\
    &\le \Kth e^{-\lambda t} \| \nabla v_0 \|_{L^q(\Omega)}
           + \Ktw  
              \int^t_0 (1+(t-s)^{-\frac12})e^{-\lambda (t-s)} 
              \left( M \| v \|_{L^q(\Omega)} + \frac{\mu}{\xi} \| \xi w\|_{L^q(\Omega)} \right) \,ds
    \\
    &\le \Kth e^{-\lambda t} \| \nabla v_0 \|_{L^q(\Omega)}
    \\   &\quad\,     
           + \Ktw |\Omega|^{\frac 1q} 
              \left(M+\frac{\mu}{\xi}\right) 
              e^{(1+\gamma)\ctime}   
              \| v_0 + \cmu w_0 \|_{L^\infty(\Omega)} 
              \int^t_0 (1+(t-s)^{-\frac12})e^{-\lambda (t-s)} e^{-\gamma s} \,ds
  \end{align*}
for all $t\in[0,T)$. 
Here, substituting $\sigma = t-s$, we infer that 
  \begin{align*}
    \int^t_0 (1+(t-s)^{-\frac12})e^{-\lambda (t-s)} e^{-\gamma s} \,ds
    &= \int^t_0 (1+\sigma^{-\frac12})e^{-\lambda \sigma} e^{-\gamma (t-\sigma)} \,d\sigma \\
    &\le e^{-\gamma t} \int^t_0 (1+\sigma^{-\frac12})e^{-(\lambda-\gamma) \sigma} \,d\sigma \\
    &\le S_1 e^{-\gamma t},
  \end{align*}
where $S_1=\int^\infty_0 (1+\sigma^{-\frac12})e^{-(\lambda-\gamma) \sigma} \,d\sigma < \infty$. 
Hence, combining the above inequalities leads to
the estimate
  \begin{align*}
    \| \nabla v(\cdot,t) \|_{L^q(\Omega)}
    &\le \Kth  \| \nabla v_0 \|_{L^q(\Omega)} e^{-\lambda t}
    \\
    &\quad\,
           + \Ktw S_1 |\Omega|^{\frac 1q} 
              \left(M+\frac{\mu}{\xi}\right) 
              e^{(1+\gamma)\ctime}   
              \| v_0 + \cmu w_0 \|_{L^\infty(\Omega)}  e^{-\gamma t}
  \end{align*}
for all $t\in[0,T)$, 
which implies the conclusion.
\end{proof}

\subsection{Proof of the fact {\boldmath $T=\tmax$}}\label{Subsec;estiU}

In this subsection we will see that $T = \tmax$. 
We first re-obtain a decay estimate for $u-\beta$ by using Lemmas \ref{full:Linf:v} and \ref{full:Linf:nabv}. 

\begin{lem}\label{full:lem;esti;u-beta}
Let $\xi, \delta, \gamma>0$ satisfy \eqref{dxg}. 
Moreover, assume that 
$\|\nabla v_0\|_{L^q(\Omega)}\le 1$.
Then
  \[
    \| u(\cdot,t) - \beta \|_{L^\infty(\Omega)}
    \le \alpha e^{-t}
           + \Kth\Kf S_2 (M+1) \| \nabla v_0 \|_{L^q(\Omega)} 
              e^{-\lambda t} 
           + \cChat \|v_0 + \xi w_0 \|_{L^\infty(\Omega)} e^{-\gamma t}
 \]
for all $t\in[0,T)$, 
where 
  \begin{align*}
    \cChat 
    &:= \left[
            \Ktw \Kf S_1S_3 |\Omega|^{\frac 1q} 
            \left(M+\frac{\mu}{\xi}\right) 
            + \frac{1}{1-\gamma} 
          \right] 
          (M+1)e^{(1+\gamma)\ctime}, 
  \\
    S_2 &:=\int^\infty_0 (1+\sigma^{-\frac12-\frac{n}{2q}})e^{-\sigma} \,d\sigma, \quad 
    S_3:=\int^\infty_0 (1+\sigma^{-\frac12-\frac{n}{2q}})e^{-(\lambda+1-\gamma) \sigma} \,d\sigma. 
  \end{align*}
\end{lem}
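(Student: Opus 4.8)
The plan is to use the variation-of-constants formula for the first equation in \eqref{P}, exploiting the fact that on $[0,T)$ we have the lower bound $u\ge\frac{1+\delta}{1-\xi}$ coming from the definition of $T$ (as in \eqref{lower:u}), hence the reaction part $-uv-u+\beta$ can be compared with $-(u-\beta)$ up to the inhomogeneity $-uv$. First I would rewrite the $u$-equation as $(u-\beta)_t=\Delta(u-\beta)-(u-\beta)-\nabla\cdot(u\nabla v)-uv$, so that $u-\beta$ solves a heat equation with linear damping $-1$, forced by the chemotactic term $-\nabla\cdot(u\nabla v)$ and by $-uv$. Applying $e^{t\Delta}$ termwise and using Duhamel gives three contributions: the initial datum $e^{-t}e^{t\Delta}(u_0-\beta)$, bounded by $\alpha e^{-t}$ in $L^\infty$; the divergence term, handled by part (iv) of Lemma \ref{semi} with exponent pair $(q,\infty)$ (legitimate since $q>n$), which produces $\int_0^t(1+(t-s)^{-\frac12-\frac{n}{2q}})e^{-(\lambda+1)(t-s)}\|u\nabla v\|_{L^q(\Omega)}\,ds$; and the zeroth-order term $-uv$, handled by part (i) after splitting off the spatial average, or more simply by the sup-norm semigroup bound $\|e^{(t-s)\Delta}\phi\|_{L^\infty}\le e^{-\lambda(t-s)}\|\phi\|_{L^\infty}$ applied to $uv$.

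Next I would estimate the two forcing norms using the bounds already proved. For the divergence term, $\|u\nabla v\|_{L^q(\Omega)}\le\|u\|_{L^\infty(\Omega)}\|\nabla v\|_{L^q(\Omega)}\le(M+1)\|\nabla v\|_{L^q(\Omega)}$ on $[0,T)$ by \eqref{u-1} (recalling $\beta>1$, so $\|u\|_{L^\infty}\le M+1$), and then Lemma \ref{full:Linf:nabv} supplies $\|\nabla v(\cdot,s)\|_{L^q(\Omega)}\le\Kth\|\nabla v_0\|_{L^q(\Omega)}e^{-\lambda s}+\cCwt\|v_0+\xi w_0\|_{L^\infty(\Omega)}e^{-\gamma s}$. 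Inserting this, the $e^{-\lambda s}$ part convolved against $(1+(t-s)^{-\frac12-\frac{n}{2q}})e^{-(\lambda+1)(t-s)}$ yields, after the substitution $\sigma=t-s$ and pulling out $e^{-\lambda t}$, a constant multiple of $S_2\,e^{-\lambda t}$; this is exactly the middle term $\Kth\Kf S_2(M+1)\|\nabla v_0\|_{L^q(\Omega)}e^{-\lambda t}$ in the statement. The $e^{-\gamma s}$ part convolved against the same kernel, again by $\sigma=t-s$ and factoring $e^{-\gamma t}$, gives a constant times $S_3\,e^{-\gamma t}$ (the extra $1$ in the exponent $\lambda+1-\gamma$ of $S_3$ coming from the linear damping), contributing $\Ktw\Kf S_1S_3|\Omega|^{1/q}(M+\tfrac\mu\xi)(M+1)e^{(1+\gamma)\ctime}$ to $\cChat$ after unwinding the definition of $\cCwt$. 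For the zeroth-order term, on $[0,T)$ we have $uv\le(M+1)v$, and $v\le v+\xi w$, so Lemma \ref{full:Linf:v} gives $\|uv(\cdot,s)\|_{L^\infty(\Omega)}\le(M+1)\|v_0+\xi w_0\|_{L^\infty(\Omega)}e^{(1+\gamma)\ctime}e^{-\gamma s}$; convolving against $e^{-(\lambda+1)(t-s)}$ and bounding $\lambda+1>1>\gamma$ produces $\frac{1}{1-\gamma}e^{-\gamma t}$ times that constant, which is the remaining piece of $\cChat$.

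The only genuinely delicate point is the application of the heat-semigroup bound to $e^{t\Delta}(u_0-\beta)$ and to $e^{(t-s)\Delta}(uv)$: parts (i) and (iv) of Lemma \ref{semi} require the argument to have zero spatial average, whereas neither $u_0-\beta$ nor $uv$ need be mean-zero. The clean way around this is to use instead the elementary $L^\infty$ contraction $\|e^{\tau\Delta}\phi\|_{L^\infty(\Omega)}\le\|\phi\|_{L^\infty(\Omega)}$ (valid without a mean-zero hypothesis, with no gain of the factor $e^{-\lambda\tau}$) for the $u_0-\beta$ term — still good enough since the linear damping supplies the decay $e^{-t}$ — and similarly for the $uv$ term, absorbing the $e^{-\lambda\tau}$ there into the $e^{-(\lambda+1)\tau}\le e^{-\tau}$ already present; alternatively one decomposes $\phi=\overline\phi+(\phi-\overline\phi)$, estimates the mean-zero part by Lemma \ref{semi}(i) and the constant $\overline\phi$ by itself, noting $\|\overline{uv}\|_{L^\infty}\le|\Omega|^{-1}\|uv\|_{L^1}\le(M+1)\|v\|_{L^1}$, which decays like $\|v+\xi w\|_{L^1}$. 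Either route keeps all constants in the stated form; no further obstacle arises since every integral encountered is one of $S_1,S_2,S_3$, all finite because $\lambda>\gamma$. Collecting the three contributions gives precisely the asserted inequality for all $t\in[0,T)$.
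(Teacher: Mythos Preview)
Your argument is correct and matches the paper's proof essentially step for step: Duhamel for $u-\beta$ with the linear damping $e^{-t}$, the $\Kf$ estimate on the divergence term combined with Lemma~\ref{full:Linf:nabv}, the plain $L^\infty$ contraction of $e^{\tau\Delta}$ (no $e^{-\lambda\tau}$ gain) on the $uv$ term combined with Lemma~\ref{full:Linf:v}, and the same convolution computations producing $S_2$, $S_3$, and $\tfrac{1}{1-\gamma}$. One small correction to your opening remark: the lower bound $u\ge\frac{1+\delta}{1-\xi}$ from \eqref{lower:u} holds only on $[\ctime,T)$, not on all of $[0,T)$, and in any case plays no role in this lemma---the only pointwise control on $u$ needed here is the upper bound $\|u\|_{L^\infty(\Omega)}\le M+1$ coming from \eqref{u-1}, so that sentence can simply be dropped.
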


\begin{proof}
From the first equation in \eqref{P} and semigroup estimates in Lemma \ref{semi}, 
we have
  \begin{align*}
    &\| u(\cdot,t)-\beta \|_{L^\infty(\Omega)}
    \\
    &\le \| e^{t(\Delta-1)}(u_0-\beta) \|_{L^\infty(\Omega)}
           + \int^t_0 
              \big\| e^{(t-s)(\Delta-1)}
                [ - \nabla \cdot (u \nabla v) - uv ] 
              \big\|_{L^\infty(\Omega)}\,ds \\
    &\le e^{-t} \| u_0-\beta \|_{L^\infty(\Omega)} 
           + \Kf \int^t_0 (1+(t-s)^{-\frac12-\frac{n}{2q}})e^{-(\lambda+1)(t-s)}
                     \|u \nabla v\|_{L^q(\Omega)}\,ds \\
    &\quad\,
           + \int^t_0 e^{-(t-s)} \| uv \|_{L^\infty(\Omega)}\,ds
    \quad\mbox{for all $t\in[0,T)$}.
  \end{align*}
As in \eqref{u-1} we know that $\|u-1\|_{L^\infty(\Omega)} \le M$ 
on $[0,T)$, and thus,
$\| u \|_{L^\infty(\Omega)} \le M +1$ on $[0,T)$.
This and Lemmas \ref{full:Linf:v}, \ref{full:Linf:nabv} 
yield
\begin{align*}
  \lp{q}{(u\nabla v)(\cdot,s)} \le 
      (M+1) 
      \left( 
        \Kth \| \nabla v_0 \|_{L^q(\Omega)} e^{-\lambda s}
           + C_1 \| v_0 + \cmu w_0 \|_{L^\infty(\Omega)}  e^{-\gamma s} 
      \right)
\end{align*}
and 
\begin{align*}
  \lp{\infty}{(uv)(\cdot,s)} 
  \le (M+1) 
       e^{(1+\gamma)\ctime}
       \| v_0 + \cmu w_0 \|_{L^\infty(\Omega)}   
       e^{-\gamma s}, 
\end{align*}
which entails
  \begin{align*}
    &\| u(\cdot,t)-\beta \|_{L^\infty(\Omega)}
    \\
    &\le \alpha e^{-t}
           + \Kth\Kf (M+1) \| \nabla v_0 \|_{L^q(\Omega)} 
              \int^t_0 (1+(t-s)^{-\frac12-\frac{n}{2q}})e^{-(\lambda+1)(t-s)}e^{-\lambda s} \,ds \\
    &\quad\,
           + \cCwt \Kf (M+1) \| v_0 + \cmu w_0 \|_{L^\infty(\Omega)} 
              \int^t_0 (1+(t-s)^{-\frac12-\frac{n}{2q}})e^{-(\lambda+1)(t-s)}e^{-\gamma s} \,ds \\
    &\quad\,
           + (M+1) e^{(1+\gamma)\ctime} \| v_0 + \cmu w_0 \|_{L^\infty(\Omega)}
              \int^t_0 e^{-(t-s)} e^{-\gamma s}\,ds
  \end{align*}
for all $t\in[0,T)$. 
Here, noting that
$\int^t_0 e^{-(t-s)} e^{-\gamma s}\,ds
    \le \frac1{1-\gamma} e^{-\gamma t}$
and
  \[
    \int^t_0 (1+(t-s)^{-\frac12-\frac{n}{2q}})e^{-(\lambda+1)(t-s)}e^{-\lambda s}\,ds
    \le S_2 e^{-\lambda t}
  \]
as well as 
  \[
    \int^t_0 (1+(t-s)^{-\frac12-\frac{n}{2q}})e^{-(\lambda+1)(t-s)}e^{-\gamma s}\,ds
    \le S_3 e^{-\gamma t},
  \]
where 
$S_2=\int^\infty_0 (1+\sigma^{-\frac12-\frac{n}{2q}})e^{-\sigma} \,d\sigma < \infty$
and
$S_3=\int^\infty_0 (1+\sigma^{-\frac12-\frac{n}{2q}})e^{-(\lambda+1-\gamma) \sigma} \,d\sigma < \infty$,
we obtain
  \begin{align*}
    \| u(\cdot,t)-\beta \|_{L^\infty(\Omega)}
    \le \alpha e^{-t}
           + \Kth \Kf S_2 (M+1) \| \nabla v_0 \|_{L^q(\Omega)} 
              e^{-\lambda t} 
           + \cChat \|v_0+\xi w_0\|_{L^\infty(\Omega)} e^{-\gamma t}
  \end{align*}
for all $t\in[0,T)$, 
where
\begin{align*}
    \cChat 
    &:= C_1 \Kf S_3 (M+1) + \frac{(M+1) e^{(1+\gamma)\ctime}}{1-\gamma}
    \\
    & = \left[
            \Ktw \Kf S_1 S_3 |\Omega|^{\frac 1q} 
            \left(M+\frac{\mu}{\xi}\right) 
            + \frac{1}{1-\gamma} 
          \right] 
          (M+1)e^{(1+\gamma)\ctime}.
\end{align*}
Thus we can arrive at the conclusion.
\end{proof}

To see $\lp{\infty}{u(\cdot,T)-\beta} < g(T)$, 
we then show the following lemma. 

\begin{lem}\label{full:lem;KeyLem}
Let $\ep \in (0,1]$ and 
let $\xi, \delta, \gamma>0$ satisfy \eqref{dxg}. 
Then if $(u_0,v_0,w_0,z_0)$ fulfills \eqref{initial} and 
\begin{align}\label{initial:small}
\lp{\infty}{v_0+\xi w_0}\le \ep 
\qquad  \mbox{and} \qquad 
\lp{q}{\nabla v_0} \le \sqrt{\ep}, 
\end{align}
then 
  \[
    \| u(\cdot,t)-\beta \|_{L^\infty(\Omega)}
    \le \alpha e^{-t}
         + \cKlam \| \nabla v_0 \|_{L^q(\Omega)}e^{-\lambda t}
         + D \ep e^{-\gamma t}
    \quad\mbox{for all $t\in [0,T)$},
  \]
where $D=D(\alpha,\beta,\eta,\xi,\delta,\gamma, q,\Omega)$ is a positive constant defined as
\begin{align*}
  D(\alpha,\beta,\eta,\xi,\delta,\gamma, q,\Omega) 
  &:= \Kth \Kf S_2 \cKlam 
  +  \left[
       \Ktw \Kf S_1 S_3 |\Omega|^{\frac 1q} 
       \wt{M}
       + \frac{1}{1-\gamma} 
    \right]  \wt{M} e^{(1+\gamma) \ctime}, 
  \\
  \wt{M} &:= (1 + \Kth\Kf S_2 )(\alpha + \beta + \eta).
\end{align*}
\end{lem}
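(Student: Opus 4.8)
The plan is to deduce Lemma~\ref{full:lem;KeyLem} directly from Lemma~\ref{full:lem;esti;u-beta}, whose conclusion already has exactly the required three–term shape: a term $\alpha e^{-t}$, a term carrying $\|\nabla v_0\|_{L^q(\Omega)}e^{-\lambda t}$, and a term carrying $\|v_0+\xi w_0\|_{L^\infty(\Omega)}e^{-\gamma t}$. So the only work is to repackage the $\|\nabla v_0\|_{L^q(\Omega)}$–dependent quantity $M=\alpha+\beta+\eta+\cKlam\|\nabla v_0\|_{L^q(\Omega)}-1$ into the constant $\wt{M}=(1+\Kth\Kf S_2)(\alpha+\beta+\eta)$ and to turn the two decaying coefficients into $\cKlam$ and $D\ep$ with the help of \eqref{initial:small}. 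First I would record that \eqref{initial:small} together with $\ep\in(0,1]$ forces $\|\nabla v_0\|_{L^q(\Omega)}\le\sqrt{\ep}\le 1$, so the standing assumption $\|\nabla v_0\|_{L^q(\Omega)}\le 1$ of Lemma~\ref{full:lem;esti;u-beta} is satisfied and that lemma applies on $[0,T)$.

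The algebraic core consists of three observations. By definition \eqref{cKlam}, $\cKlam=\Kth\Kf S_2(\alpha+\beta+\eta)$. Since $\|\nabla v_0\|_{L^q(\Omega)}\le 1$, it follows that $M+1=(\alpha+\beta+\eta)+\cKlam\|\nabla v_0\|_{L^q(\Omega)}\le(\alpha+\beta+\eta)(1+\Kth\Kf S_2)=\wt{M}$, and since $\xi>\mu$ (from \eqref{dxg}) also $M+\frac{\mu}{\xi}<M+1\le\wt{M}$. Moreover $\ctime$ depends only on $\alpha,\beta,\eta,\delta,\xi,\gamma$ through $\cKlam$, not on $\|\nabla v_0\|_{L^q(\Omega)}$, so $e^{(1+\gamma)\ctime}$ is a fixed constant. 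Combining these bounds with $\|v_0+\xi w_0\|_{L^\infty(\Omega)}\le\ep$ shows that the third term of Lemma~\ref{full:lem;esti;u-beta} is bounded by $\big[\Ktw\Kf S_1 S_3|\Omega|^{1/q}\wt{M}+\frac{1}{1-\gamma}\big]\wt{M}\,e^{(1+\gamma)\ctime}\,\ep\,e^{-\gamma t}$, i.e.\ by all of $D\ep e^{-\gamma t}$ except its first summand.

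For the middle term, which is $\Kth\Kf S_2(M+1)\|\nabla v_0\|_{L^q(\Omega)}e^{-\lambda t}$, I would use the split
\[
  \Kth\Kf S_2(M+1)\|\nabla v_0\|_{L^q(\Omega)}
  =\cKlam\|\nabla v_0\|_{L^q(\Omega)}+\Kth\Kf S_2\,\cKlam\,\|\nabla v_0\|_{L^q(\Omega)}^2 ,
\]
estimate $\|\nabla v_0\|_{L^q(\Omega)}^2\le\ep$ in the second summand, and invoke $\gamma<\lambda$ (from \eqref{dxg}) to replace its factor $e^{-\lambda t}$ by $e^{-\gamma t}$; this contributes precisely the remaining summand $\Kth\Kf S_2\cKlam$ of $D$ while leaving the sharp coefficient $\cKlam$ on the surviving $\|\nabla v_0\|_{L^q(\Omega)}e^{-\lambda t}$ term. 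Adding the three pieces yields the claimed estimate on $[0,T)$. I do not anticipate a genuine obstacle; the only delicate point is the quadratic split — it is exactly what lets one absorb one factor of $\|\nabla v_0\|_{L^q(\Omega)}$ into an $\ep$ without degrading the coefficient $\cKlam$ of the $e^{-\lambda t}$ term — together with keeping the directions of the comparisons $M+1\le\wt{M}$ and $M+\frac{\mu}{\xi}\le\wt{M}$ correct.
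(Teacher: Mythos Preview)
Your proposal is correct and follows essentially the same approach as the paper's own proof: apply Lemma~\ref{full:lem;esti;u-beta} (noting $\|\nabla v_0\|_{L^q(\Omega)}\le\sqrt{\ep}\le1$), use the identity $\cKlam=\Kth\Kf S_2(\alpha+\beta+\eta)$ to perform the quadratic split of the middle term, absorb $\|\nabla v_0\|_{L^q(\Omega)}^2\le\ep$ and $e^{-\lambda t}\le e^{-\gamma t}$, and bound $M+\tfrac{\mu}{\xi}<M+1\le\wt{M}$ to control $\cChat$. The paper does exactly these steps in the same order.
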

\begin{proof} 
In view of the fact that $\lp{q}{\nabla v_0} \le \sqrt{\ep} \le1$, 
Lemma \ref{full:lem;esti;u-beta} 
enables us to obtain
  \[
    \| u(\cdot,t) - \beta \|_{L^\infty(\Omega)}
    \le \alpha e^{-t}
           + \Kth\Kf S_2 (M+1) \| \nabla v_0 \|_{L^q(\Omega)} 
              e^{-\lambda t} 
           + \cChat \|v_0 + \xi w_0 \|_{L^\infty(\Omega)} e^{-\gamma t}
 \]
for all $t\in[0,T)$.
Noting from \eqref{cKlam} that 
$\cKlam = \Kth\Kf S_2 (\alpha + \beta + \eta)$ holds and recalling the definition of $M$ in Lemma \ref{full:Linf:nabv} and
the second condition in \eqref{initial:small},
we see that 
\begin{align*}
  \Kth\Kf S_2 (M+1) \| \nabla v_0 \|_{L^q(\Omega)} 
  &= \Kth\Kf S_2 (\alpha + \beta + \eta + \cKlam  \| \nabla v_0 \|_{L^q(\Omega)} ) 
      \| \nabla v_0 \|_{L^q(\Omega)}  
\\
  &= \Kth\Kf S_2 (\alpha + \beta + \eta ) \| \nabla v_0 \|_{L^q(\Omega)} 
       + \Kth\Kf S_2 \cKlam \| \nabla v_0 \|_{L^q(\Omega)}^2
\\
  &\le \cKlam \| \nabla v_0 \|_{L^q(\Omega)} 
       + \Kth\Kf S_2 \cKlam \ep.
\end{align*}
Moreover, since the fact that $\frac{\mu}{\xi}<1$ (from \eqref{dxg}) entails 
\begin{align*}
 M+\frac\mu\xi< M + 1
 \le \alpha + \beta + \eta + \Kth\Kf S_2 (\alpha + \beta + \eta)
 = (1 + \Kth\Kf S_2 )(\alpha + \beta + \eta) 
 = \wt{M}, 
\end{align*}
we have from the definition of $C_2$ in Lemma \ref{full:lem;esti;u-beta} and the first condition in \eqref{initial:small} 
that 
\begin{align*}
 C_2 \lp{\infty}{v_0 + \xi w_0} \le 
          \left[
            K_2 S_1 S_3 \Kf |\Omega|^{\frac 1q} 
            \wt{M}
            + \frac{1}{1-\gamma} 
          \right] \wt{M} e^{(1+\gamma) \ctime} \ep. 
\end{align*}
Thus, combining the above inequalities,
we attain that 
\begin{align*}
   \| u(\cdot,t)-\beta \|_{L^\infty(\Omega)}
   &\le \alpha e^{-t}
         + \cKlam \| \nabla v_0 \|_{L^q(\Omega)}e^{-\lambda t} 
         + \Kth\Kf S_2 \cKlam \ep \cdot e^{-\gamma t}         
         \\
         &\quad\,
  + \left[
       \Ktw \Kf S_1S_3 |\Omega|^{\frac 1q} 
       \wt{M}
       + \frac{1}{1-\gamma} 
    \right]  \wt{M} e^{(1+\gamma) \ctime} \ep \cdot e^{-\gamma t}
\end{align*}
for all $t\in[0,T)$,
which implies the conclusion. 
\end{proof}

By choosing $\varepsilon>0$ satisfying $D\varepsilon < \eta$, we can show that $T=\tmax$ holds.  


\begin{lem}\label{beh:u}
Let $\xi, \delta, \gamma>0$ satisfy \eqref{dxg}. 
Then there is $\varepsilon_1 = \varepsilon_1(\alpha,\beta,\eta,\xi,\delta,\gamma, q,\Omega)  > 0$
satisfying the following property\/{\rm :} 
If $(u_0,v_0,w_0,z_0)$ fulfills \eqref{initial} and \eqref{initial:small} with $\ep \le \ep_1$, 
then $T=\tmax$ and
  \[
    \| u(\cdot,t)-\beta \|_{L^\infty(\Omega)}
    \le \alpha e^{-t}
         + \cKlam \| \nabla v_0 \|_{L^q(\Omega)}e^{-\lambda t}
         + \eta e^{-\gamma t}
  \]
as well as 
  \[
    \| v (\cdot,t) + \cmu w (\cdot,t) \|_{L^\infty(\Omega)} 
    \le \|v_0+\xi w_0\|_{L^\infty(\Omega)}
         e^{(1+\gamma)\ctime}  
         e^{-\gamma t}, 
         \quad \lp{q}{\nabla v(\cdot,t)}\le Ce^{-\gamma t}
  \]
for all $t\in [0,\tmax)$ with some $C>0$.  
\end{lem}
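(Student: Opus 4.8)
\textbf{Proof proposal for Lemma \ref{beh:u}.}
The plan is a standard continuity/bootstrap argument: choose $\ep_1$ so small that the decay estimate for $u-\beta$ supplied by Lemma \ref{full:lem;KeyLem} beats the defining bound $g(t)$ in \eqref{T} \emph{strictly}, and then conclude that $T$ cannot be strictly smaller than $\tmax$. Concretely, recalling that $g(t)=\alpha e^{-t}+\cKlam\|\nabla v_0\|_{L^q(\Omega)}e^{-\lambda t}+\eta e^{-\gamma t}$, I would fix
\[
  \ep_1 := \min\left\{ 1,\ \frac{\eta}{2D(\alpha,\beta,\eta,\xi,\delta,\gamma,q,\Omega)} \right\},
\]
where $D$ is the constant from Lemma \ref{full:lem;KeyLem}. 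Then for initial data satisfying \eqref{initial} and \eqref{initial:small} with $\ep\le\ep_1$, the hypotheses of Lemmas \ref{full:Linf:v}--\ref{full:lem;KeyLem} are met (in particular $\|\nabla v_0\|_{L^q(\Omega)}\le\sqrt{\ep}\le1$), so Lemma \ref{full:lem;KeyLem} gives
\[
  \| u(\cdot,t)-\beta \|_{L^\infty(\Omega)}
  \le \alpha e^{-t}+\cKlam\|\nabla v_0\|_{L^q(\Omega)}e^{-\lambda t}+D\ep\, e^{-\gamma t}
  \le \alpha e^{-t}+\cKlam\|\nabla v_0\|_{L^q(\Omega)}e^{-\lambda t}+\tfrac{\eta}{2} e^{-\gamma t}
\]
for all $t\in[0,T)$, which is strictly less than $g(t)$ for every such $t$ (indeed the gap is at least $\tfrac{\eta}{2}e^{-\gamma t}>0$).

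The key step is to rule out $T<\tmax$. Suppose for contradiction that $T<\tmax$. Since the solution components are continuous on $[0,\tmax)$ (Lemma \ref{local}), letting $t\nearrow T$ in the displayed inequality yields
\[
  \| u(\cdot,T)-\beta \|_{L^\infty(\Omega)}
  \le \alpha e^{-T}+\cKlam\|\nabla v_0\|_{L^q(\Omega)}e^{-\lambda T}+\tfrac{\eta}{2} e^{-\gamma T}
  < g(T).
\]
By continuity of $t\mapsto \| u(\cdot,t)-\beta \|_{L^\infty(\Omega)}-g(t)$ there is then $\tau\in(T,\tmax)$ with $\| u(\cdot,t)-\beta \|_{L^\infty(\Omega)}\le g(t)$ on $[0,\tau)$, contradicting the maximality of $T$ in \eqref{T}. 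Hence $T=\tmax$, and the decay estimate for $u-\beta$ with the cleaner constant $\eta$ (not $\tfrac\eta2$) follows a fortiori from the definition of $T$ itself, valid now on all of $[0,\tmax)$.

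Finally, with $T=\tmax$ in hand, the remaining two assertions are immediate rereadings of earlier lemmas on the full interval: Lemma \ref{full:Linf:v} gives the stated bound on $\|v(\cdot,t)+\xi w(\cdot,t)\|_{L^\infty(\Omega)}$ on $[0,\tmax)$, and Lemma \ref{full:Linf:nabv} gives $\|\nabla v(\cdot,t)\|_{L^q(\Omega)}\le \Kth\|\nabla v_0\|_{L^q(\Omega)}e^{-\lambda t}+\cCwt\|v_0+\xi w_0\|_{L^\infty(\Omega)}e^{-\gamma t}$, which is bounded by $Ce^{-\gamma t}$ with, e.g., $C:=\Kth\|\nabla v_0\|_{L^q(\Omega)}+\cCwt\|v_0+\xi w_0\|_{L^\infty(\Omega)}$ since $\lambda\ge\gamma$ by \eqref{dxg}. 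The only genuinely delicate point is bookkeeping: one must make sure the constant $D$ (hence the choice of $\ep_1$) does not itself depend on $\ep$ or on $T$ — it does not, since $\ctime$, $\wt M$ and all the $S_i$ are controlled purely by $\alpha,\beta,\eta,\xi,\delta,\gamma,q,\Omega$ once $\|\nabla v_0\|_{L^q(\Omega)}\le1$ is known — so the smallness threshold can be fixed \emph{before} invoking the continuity argument. That circularity check is the main obstacle to state carefully; everything else is routine.
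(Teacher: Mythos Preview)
Your proof is correct and follows essentially the same route as the paper: the same choice $\ep_1=\min\{1,\eta/(2D)\}$, the same application of Lemma~\ref{full:lem;KeyLem} to obtain the strict inequality with $\tfrac{\eta}{2}$ in place of $\eta$, and the same appeal to Lemmas~\ref{full:Linf:v} and~\ref{full:Linf:nabv} once $T=\tmax$ is established. Your version is in fact a bit more explicit than the paper's on two points---spelling out the continuity argument that extends the bound past $T$ to reach the contradiction, and noting that $D$ is independent of $\ep$ so the threshold can be fixed in advance---both of which the paper leaves implicit.
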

\begin{proof}
To show that $T=\tmax$ by a contradiction argument,
we assume that $T<\tmax$. 
Now we take $\ep_1 = \min\{1,\frac\eta{2D}\}$, 
where
$D=D(\alpha,\beta,\eta,\xi,\delta,\gamma, q,\Omega)$ 
is a constant defined in Lemma \ref{full:lem;KeyLem}, and
assume \eqref{initial:small} with $\ep \le \ep_1$.
By virtue of Lemma \ref{full:lem;KeyLem} 
we have
\begin{align*}
   \| u(\cdot,t)-\beta \|_{L^\infty(\Omega)}
    \le \alpha e^{-t}
         + \cKlam \| \nabla v_0 \|_{L^q(\Omega)}e^{-\lambda t}
         + \frac\eta2 e^{-\gamma t}
   \quad\mbox{for all $t\in [0,T)$},
\end{align*}
and then this contradicts with 
the definition of $T$ in \eqref{T}. 
Consequently, we see that $T=\tmax$, 
and the definition of $T$ and Lemmas \ref{full:Linf:v}, \ref{full:Linf:nabv} lead to the conclusion.
\end{proof}

\section{Decay estimates for {\boldmath $\nabla w$} and {\boldmath $z$}: Proof of Theorem \ref{thm}}\label{Sec;BddOfZ}

In this section we shall show decay estimates for $\nabla w,z$, and then attain Theorem \ref{thm}. 
We first prepare existence of a useful test function $\varphi$ which is used in the proof of \cite[Lemma 6.2]{TW2012_consumption}.


\begin{lem}\label{phi}
For all $p>1$ and $\ell \in (0,p-1)$ 
there exists $b_0>0$ such that 
for every $\wmax\in(0,b_0)$ 
one can find $\kappa=\kappa(b)>0$ with the property that
the function $\varphi$ defined as
$\varphi(y):=(2\wmax-y)^{-\ell}$
for $y\in[0,\wmax]$
satisfies that 
  \[
    \varphi''(y)-p\varphi'(y)>0
    \quad\mbox{and}\quad
    p(p-1)\varphi(y) 
      - \frac{\big[-2p \varphi'(y) + p(p-1) \varphi(y) \big]^2}{4\big[ \varphi''(y) - p \varphi'(y) \big]}
\ge \kappa
  \]
for all $y\in(0,\wmax)$.
\end{lem}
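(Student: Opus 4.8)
The plan is to construct $\varphi$ explicitly as the stated power function and verify the two inequalities by elementary but careful computation, exploiting that on the compact interval $[0,\wmax]$ with $\wmax$ small, the derivatives $\varphi'$ and $\varphi''$ are controlled by the value $\varphi$ up to harmless constants. First I would record the derivatives: for $\varphi(y)=(2\wmax-y)^{-\ell}$ one has $\varphi'(y)=\ell(2\wmax-y)^{-\ell-1}>0$ and $\varphi''(y)=\ell(\ell+1)(2\wmax-y)^{-\ell-2}>0$, and more importantly $\varphi'(y)=\ell\,\varphi(y)/(2\wmax-y)$ and $\varphi''(y)=\ell(\ell+1)\varphi(y)/(2\wmax-y)^2$. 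Since $y\in[0,\wmax]$, we have $\wmax\le 2\wmax-y\le 2\wmax$, so each of $\varphi'/\varphi$ and $\varphi''/\varphi$ is squeezed between a fixed positive constant times $\wmax^{-1}$ and a fixed positive constant times $\wmax^{-1}$ (resp. $\wmax^{-2}$); these are the only facts I will need.

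For the first inequality $\varphi''-p\varphi'>0$: dividing by $\varphi>0$, this reads $\ell(\ell+1)/(2\wmax-y)^2 - p\ell/(2\wmax-y)>0$, i.e. $(\ell+1)/(2\wmax-y) > p$, which holds for all $y\in(0,\wmax)$ as soon as $(\ell+1)/(2\wmax) > p$, that is $\wmax < (\ell+1)/(2p)$. So I would choose $b_0$ at most this threshold; for $\wmax\in(0,b_0)$ the first inequality is automatic and moreover $\varphi''-p\varphi'$ is \emph{comparable} to $\varphi''$, i.e. bounded below by $c_1\varphi/\wmax^2$ for a constant $c_1$ depending only on $p,\ell$ when $\wmax$ is further shrunk (e.g. $\wmax<(\ell+1)/(4p)$ forces $(\ell+1)/(2\wmax-y)\ge 2p$, hence $\varphi''-p\varphi'\ge\tfrac12\varphi''$).

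For the second inequality I would factor out $\varphi$. Writing $a:=\varphi'/\varphi$ and $b:=\varphi''/\varphi$ (abusing the macro-free notation; any local names work), the left-hand side equals
\[
  \varphi\left[\, p(p-1) - \frac{\big(-2pa + p(p-1)\big)^2}{4\,(b-pa)} \,\right].
\]
Now as $\wmax\to 0$, $a\to\infty$ like $\ell/(2\wmax-y)$ and $b\to\infty$ like $\ell(\ell+1)/(2\wmax-y)^2$, so $b-pa$ is dominated by $b$ and the quotient $\big(-2pa+p(p-1)\big)^2/\big(4(b-pa)\big)$ behaves like $(2pa)^2/(4b) = p^2 a^2/b \to p^2\cdot \ell^2/\big(\ell(\ell+1)\big) = p^2\ell/(\ell+1)$, which is \emph{strictly less than} $p(p-1)$ precisely when $\ell<p-1$ — this is exactly the hypothesis $\ell\in(0,p-1)$. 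Hence the bracket converges, uniformly in $y\in[0,\wmax]$, to the strictly positive constant $p(p-1) - p^2\ell/(\ell+1) = \frac{p}{\ell+1}\big((p-1)(\ell+1) - p\ell\big) = \frac{p(p-1-\ell)}{\ell+1}>0$. Making this rigorous is just a matter of estimating the lower-order corrections: replace "$\to$" by explicit two-sided bounds valid once $\wmax<b_0$ for a suitable $b_0=b_0(p,\ell)$, so that the bracket is bounded below by, say, half its limiting value; then since $\varphi(y)=(2\wmax-y)^{-\ell}\ge(2\wmax)^{-\ell}>0$ on $[0,\wmax]$, the product is bounded below by $\kappa:=\tfrac12\cdot\frac{p(p-1-\ell)}{\ell+1}\cdot(2b_0)^{-\ell}>0$, which is the desired uniform lower bound.

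The only mild obstacle is bookkeeping: one must track the $O(\wmax)$ and $O(\wmax^2)$ corrections in $a$ and $b$ carefully enough to see that the quotient stays below $p(p-1)$ by a fixed margin for all $y\in(0,\wmax)$ simultaneously (not just pointwise), and to extract a single $b_0$ that works for both inequalities and gives the explicit $\kappa$. Since everything is a rational function of $(2\wmax-y)^{-1}$ on a compact interval, uniformity is automatic once the threshold on $\wmax$ is fixed, and no genuine difficulty arises. I would therefore (i) fix $b_0$ small enough that $(\ell+1)/(2\wmax-y)\ge 2p$ on $[0,\wmax]$ and that the displayed bracket exceeds $\tfrac12\cdot p(p-1-\ell)/(\ell+1)$; (ii) conclude the first inequality and the lower bound $\varphi''-p\varphi'\ge\tfrac12\varphi''>0$; (iii) conclude the second inequality with $\kappa$ as above; and note that this is precisely the construction used in \cite[Lemma 6.2]{TW2012_consumption}, adapted to the present exponent range.
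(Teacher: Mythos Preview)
Your proposal is correct and follows essentially the same approach as the paper: both compute $\varphi',\varphi''$ explicitly, impose $\wmax<(\ell+1)/(2p)$ for the first inequality, and for the second reduce to the observation that the leading contribution to the quotient is $p^2\ell/(\ell+1)<p(p-1)$ precisely when $\ell<p-1$, yielding a uniform positive lower bound $\kappa$ proportional to $(2\wmax)^{-\ell}\cdot\frac{p(p-1-\ell)}{\ell+1}$. The only cosmetic difference is that the paper bounds the numerator via $(-2p\varphi'+p(p-1)\varphi)^2\le 4p^2\varphi'^2+p^2(p-1)^2\varphi^2$ (dropping the negative cross term) and then computes the resulting rational expression in $(2\wmax-y)$ explicitly, whereas you phrase the same estimate as an asymptotic in $\wmax\to0$ to be made quantitative; both routes arrive at the same threshold and the same $\kappa$.
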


\begin{proof}
It follows from the relation $\ell<p-1$ that 
$p-1 - \frac{p\ell^2}{\ell(\ell+1)} 
= \frac{(p-1)(\ell+1)-p\ell}{\ell+1}
= \frac{p-\ell-1}{\ell+1}>0$. 
Therefore we can find $b_0>0$ small enough
such that for every $\wmax \in (0,b_0)$,
  \begin{align}\label{wmax}
    \wmax < \frac{\ell+1}{2p} 
    \quad\mbox{and}\quad
    p-1 - \frac{p\ell^2+p(p-1)^2\wmax^{2}}{\ell( \ell+1 - 2p\wmax )}>0.
  \end{align}
Also we observe that 
  \[
    \varphi'(y)=\ell(2\wmax-y)^{-\ell-1}
    \quad\mbox{and}\quad
    \varphi''(y)=\ell(\ell+1)(2\wmax-y)^{-\ell-2}
    \quad\mbox{for all $y\in(0,\wmax)$}.
  \]
By virtue of the first relation in \eqref{wmax} 
we see that
  \begin{align*}
    \varphi''(y)-p\varphi'(y)
    &=( \ell+1 - p(2\wmax-y) ) \cdot \ell(2\wmax-y)^{-\ell-2}
    \\
    &\ge( \ell+1 - 2p\wmax ) \cdot \ell(2\wmax)^{-\ell-2}
    >0
    \quad\mbox{for all $y\in(0,\wmax)$}.
  \end{align*}
Moreover, from the straightforward calculations 
we have 
  \begin{align*}
    &p(p-1)\varphi(y) 
      - \frac{\big[-2p \varphi'(y) + p(p-1) \varphi(y) \big]^2}{4\big[ \varphi''(y) - p \varphi'(y) \big]}
    \\
    &\ge
        p(p-1)\varphi(y) 
        - \frac{4p^2\varphi'^2(y) 
                    +p^2(p-1)^2\varphi^2(y)}
                  {4\big[ \varphi''(y) - p \varphi'(y) \big]}
    \\
    &=
        p(p-1)(2\wmax-y)^{-\ell}
        - \frac{4p^2\ell^2(2\wmax-y)^{-2\ell-2} 
                     +p^2(p-1)^2(2\wmax-y)^{-2\ell}}
                  {4( \ell+1 - p(2\wmax-y) ) \cdot \ell(2\wmax-y)^{-\ell-2} }
    \\
    &=p(2\wmax-y)^{-\ell}
        \left[
        p-1
        - \frac{4p\ell^2 
                     +p(p-1)^2(2\wmax-y)^{2}}
                  {4\ell( \ell+1 - p(2\wmax-y) )}
        \right]
    \\
    &\ge
        p(2\wmax)^{-\ell}
        \left[
        p-1
        - \frac{p\ell^2 
                     +p(p-1)^2\wmax^{2}}
                  {\ell( \ell+1 - 2p\wmax )}
        \right] 
        =:\kappa
  \end{align*}
for all $y\in(0,\wmax)$.
The second relation in \eqref{wmax} ensures that $\kappa>0$.
\end{proof}

In the following 
we let $\xi, \delta, \gamma>0$ satisfy \eqref{dxg} and let $q >n$, as well as fix  
\[ p > \frac{qn}{q-n} (\ge n).\]  
We then show an $L^p$-estimate for $z$ by referring the proof of \cite[Lemma 6.2]{TW2012_consumption}. 
\begin{lem}\label{Lpz}
There exists $\zeta>0$ 
with the following property\/{\rm :} 
If $(u_0,v_0,w_0,z_0)$ fulfills \eqref{initial} and \eqref{initial:small} with $\ep \le \ep_1$, where $\ep_1$ is given by Lemma \ref{beh:u}, 
and if there exists $t_0\in[0,\tmax)$ such that 
  \begin{align}\label{assum:small}
    \| v(\cdot,t) + \xi w(\cdot,t) \|_{L^\infty(\Omega)} \le \zeta
    \quad\mbox{for all $t\in[t_0,\tmax)$},
  \end{align}
then there exists $C>0$ such that 
  \begin{align}\label{lpesti}
    \|z(\cdot,t)\|_{L^p(\Omega)} \le C
    \quad\mbox{for all $t\in[0,\tmax)$}.
  \end{align}
\end{lem}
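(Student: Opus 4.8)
The plan is to mimic the standard $L^p$-energy argument for chemotaxis-consumption systems (as in \cite[Lemma 6.2]{TW2012_consumption}), using the weighted test function $\varphi$ from Lemma \ref{phi} to absorb the cross-diffusion term $-\nabla\cdot(z\nabla w)$. First I would fix $p>\frac{qn}{q-n}$ as above, invoke Lemma \ref{phi} with $\ell\in(0,p-1)$ to be chosen, and pick $\zeta\in(0,b_0)$ so that the hypothesis \eqref{assum:small} forces $w\le \frac1\xi\|v+\xi w\|_{L^\infty(\Omega)}\le \frac\zeta\xi<b_0$ on $[t_0,\tmax)$; this keeps $\varphi(w)$ well-defined and bounded between two positive constants. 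Then I would test the $z$-equation against $p z^{p-1}\varphi(w)$ and compute $\frac{d}{dt}\int_\Omega z^p\varphi(w)$. The diffusion and cross-diffusion terms produce, after integration by parts, a quadratic form in $\nabla z^{p/2}$ and $z^{p/2}\nabla w$ whose coefficient matrix is (by the two inequalities in Lemma \ref{phi}) coercive: the gradient terms are controlled by $-\kappa\int_\Omega z^p|\nabla w|^2\,\varphi(w)\,(2b-w)^{-2}$ plus a genuinely negative $-c\int_\Omega|\nabla z^{p/2}|^2$ (this is exactly the point of the discriminant condition $p(p-1)\varphi - \frac{[\cdots]^2}{4[\cdots]}\ge\kappa$). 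The reaction term $f(w)z - z$ contributes $\int_\Omega (pf(w)-p)z^p\varphi(w)\le -p\,c_1\int_\Omega z^p\varphi(w) + p\int_\Omega f(w)z^p\varphi(w)$, and since $f(w)\le w\le \zeta/\xi$ is small, the reaction is essentially $\le -\tfrac p2\int_\Omega z^p\varphi(w)$ once $\zeta$ is small enough relative to $1$.

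The remaining term involves $w_t = \Delta w + uv - wz - w$ entering through $z^p\varphi'(w)w_t$; the $\Delta w$ part is already folded into the quadratic form, and the lower-order part $z^p\varphi'(w)(uv - wz - w)$ must be estimated. Here $uv\le \|u\|_{L^\infty(\Omega)}\|v\|_{L^\infty(\Omega)}$ with $\|u\|_{L^\infty(\Omega)}\le M+1$ (from Lemma \ref{beh:u}, valid because $\ep\le\ep_1$) and $\|v\|_{L^\infty(\Omega)}\le\|v+\xi w\|_{L^\infty(\Omega)}\le\zeta$, so $uv$ is a bounded, in fact small, source; the $-wz^{p+1}\varphi'(w)$ term has a favourable sign (as $\varphi'>0$) and can be discarded; the $-wz^p\varphi'(w)$ term is again nonpositive. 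Thus I would arrive at a differential inequality of the form
\[
  \frac{d}{dt}\int_\Omega z^p\varphi(w) + c_2\int_\Omega|\nabla z^{p/2}|^2 + c_3\int_\Omega z^p\varphi(w) \le c_4\int_\Omega z^p\varphi(w)^{?} + c_5
\]
where the right-hand side collects the contributions of the bounded source $uv$ and a term of lower order in $z$. The standard trick is then to use the Gagliardo--Nirenberg inequality together with Lemma \ref{mass} (which gives $\|z\|_{L^1(\Omega)}\le m_0$, hence $\|z^{p/2}\|_{L^{2/p}(\Omega)}$ under control) to interpolate $\int_\Omega z^p = \|z^{p/2}\|_{L^2(\Omega)}^2$ against $\|\nabla z^{p/2}\|_{L^2(\Omega)}^2$ and $\|z^{p/2}\|_{L^{2/p}(\Omega)}^2$, absorbing the troublesome terms into $c_2\int_\Omega|\nabla z^{p/2}|^2$ and leaving a clean ODI $y'\le -c\,y + C$ for $y(t):=\int_\Omega z^p\varphi(w)$. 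An ODE comparison then gives $y(t)\le\max\{y(0),C/c\}$, and since $\varphi(w)$ is bounded below by a positive constant, this yields \eqref{lpesti}. One must handle the initial interval $[0,t_0]$ separately — but there $z$ is a smooth function on a compact time interval, so $\|z(\cdot,t)\|_{L^p(\Omega)}$ is trivially bounded, and one only needs the argument above to give uniformity as $t\to\tmax$.

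The main obstacle I anticipate is making the quadratic-form / discriminant bookkeeping precise: one has to expand $\int_\Omega \nabla(z^{p-1}\varphi(w))\cdot\nabla z$ and $\int_\Omega z\nabla(z^{p-1}\varphi(w))\cdot\nabla w$, collect all terms quadratic in $\nabla z^{p/2}$ and $z^{p/2}\nabla w$, and verify that the resulting symmetric $2\times2$ coefficient matrix is positive definite with the quantitative lower bound supplied by Lemma \ref{phi} — in particular tracking that $\varphi''-p\varphi'>0$ is what lets one complete the square and that $p(p-1)\varphi - \frac{[-2p\varphi'+p(p-1)\varphi]^2}{4(\varphi''-p\varphi')}\ge\kappa$ is exactly the leftover coercivity in the $z^{p/2}\nabla w$ direction. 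The secondary technical point is the exponent arithmetic in the Gagliardo--Nirenberg step: one needs $p>\frac{qn}{q-n}$ precisely so that, later, the $L^p$-bound on $z$ feeds into an $L^\infty$-bound and a $W^{1,q}$-bound on $w$ via the heat semigroup; within this lemma, the GN interpolation just needs $p\ge1$ and $n\ge2$, which is automatic, so the real subtlety is only in correctly choosing $\zeta$ (and hence the smallness of $w$, $uv$, $f(w)$) small enough that all the "bad" coefficients are dominated.
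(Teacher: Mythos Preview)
Your proposal is correct and follows essentially the same approach as the paper's proof: test the $z$-equation with $z^{p-1}\varphi(w)$, use Lemma~\ref{phi} to complete the square in the gradient cross terms and extract a negative $\int_\Omega z^{p-2}|\nabla z|^2$ contribution, bound the lower-order terms $f(w)z^p\varphi(w)$ and $uv\,z^p\varphi'(w)$ by $c\int_\Omega z^p$, and absorb this via Gagliardo--Nirenberg together with the $L^1$ mass bound from Lemma~\ref{mass}. One small correction in your bookkeeping: after Young's inequality the $z^p|\nabla w|^2$ term is \emph{exactly} cancelled, and the leftover coercivity $\kappa$ from Lemma~\ref{phi} is the coefficient of $z^{p-2}|\nabla z|^2$ (equivalently of $|\nabla z^{p/2}|^2$), not of $z^p|\nabla w|^2$ as you wrote --- but this does not affect the argument, since it is precisely the $|\nabla z^{p/2}|^2$ term that the Gagliardo--Nirenberg absorption requires.
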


\begin{proof}
Lemma \ref{beh:u} implies 
  \[
    \| u(\cdot,t) \|_{L^\infty(\Omega)}
    \le 
    \alpha + \cKlam \| \nabla v_0 \|_{L^q(\Omega)} + \eta + \beta =:\umax
    \quad\mbox{on $[0,\tmax)$}.
  \]
Let  
$\ell \in (0, p-1)$,
and choose 
$\wmax \in(0,b_0)$, 
where $b_0$ is given by Lemma \ref{phi}. 
We also assume that there exists $t_0 \in [0,\tmax)$ fulfilling
\eqref{assum:small} with $\zeta:=\xi\wmax$, 
which ensures that 
  \[
    0 \le w \le \wmax
    \quad\mbox{in $\overline{\Omega}\times[t_0, \tmax)$}.
  \]
Multiplying the fourth equation in \eqref{P} by $z^{p-1}\varphi(w)$, 
where $\varphi$ is defined in Lemma \ref{phi},  
and integrating by parts, we have
  \begin{align}\label{ene}
    \frac{d}{dt} \int_\Omega z^p \varphi(w)
    &= -p(p-1)\int_\Omega z^{p-2}\varphi(w) |\nabla z|^2
         -p\int_\Omega z^{p-1} \varphi'(w) \nabla w \cdot \nabla z
         \notag\\ &\quad\,
         + p(p-1)\int_\Omega z^{p-1} \varphi(w) \nabla w \cdot \nabla z
         + p\int_\Omega z^p \varphi'(w) |\nabla w|^2
         \notag\\ &\quad\,
         + p\int_\Omega z^p f(w) \varphi(w)
         - p\int_\Omega z^p \varphi(w) 
    \notag\\ &\quad\,
         - p\int_\Omega z^{p-1} \varphi'(w) \nabla w \cdot \nabla z
         - \int_\Omega z^p \varphi''(w) |\nabla w|^2
    \notag\\ &\quad\,
         + \int_\Omega uvz^p \varphi'(w)
         - \int_\Omega z^{p+1}w \varphi'(w)
         - \int_\Omega z^pw \varphi'(w)
    \notag\\
    &= -p(p-1)\int_\Omega z^{p-2}\varphi(w) |\nabla z|^2
         \notag\\ &\quad\,     
         + \int_\Omega z^{p-1} \big[-2p \varphi'(w) + p(p-1) \varphi(w) \big] \nabla w \cdot \nabla z
         \notag\\ &\quad\,
         - \int_\Omega z^p \big[ \varphi''(w) - p \varphi'(w) \big] |\nabla w|^2
         \notag\\ &\quad\,
         + p\int_\Omega z^p f(w) \varphi(w)
         - p\int_\Omega z^p \varphi(w) 
    \notag\\ &\quad\,
         + \int_\Omega uvz^p \varphi'(w)
         - \int_\Omega z^{p+1}w \varphi'(w)
         - \int_\Omega z^pw \varphi'(w)
  \end{align}
on $(t_0,\tmax)$. 
Noting from Lemma \ref{phi} that $\varphi''(w) - p \varphi'(w)>0$ 
on $(t_0,\tmax)$, 
we can utilize Young's inequality to obtain 
  \begin{align*}
    &z^{p-1} \big[-2p \varphi'(w) + p(p-1) \varphi(w) \big] \nabla w \cdot \nabla z
    \\
    &\le z^p \big[ \varphi''(w) - p \varphi'(w) \big] |\nabla w|^2
           + z^{p-2} \frac{\big[-2p \varphi'(w) + p(p-1) \varphi(w) \big]^2}{4\big[ \varphi''(w) - p \varphi'(w) \big]} |\nabla z|^2
  \end{align*}
on $(t_0,\tmax)$. Moreover, Lemma \ref{phi} guarantees that 
  \[
    -p(p-1)\varphi(w)
    +\frac{\big[-2p \varphi'(w) + p(p-1) \varphi(w) \big]^2}{4\big[ \varphi''(w) - p \varphi'(w) \big]}
    \le -\kappa
    \quad\mbox{on $(t_0,\tmax)$}.
  \]
Thus, the above inequalities together with \eqref{ene} and $\varphi'>0$ yield 
  \[
    \frac{d}{dt} \int_\Omega z^p \varphi(w)
    + p\int_\Omega z^p \varphi(w)
    \le - \kappa \int_\Omega z^{p-2}|\nabla z|^2
         + p\int_\Omega z^p f(w) \varphi(w) 
         + \int_\Omega uvz^p \varphi'(w)
  \]
on $(t_0,\tmax)$. 
Since $\varphi(w) \le \wmax^{-\ell}$ and $\varphi'(w) \le \ell \wmax^{-\ell-1}$ 
as well as 
$uv \le c_1$ with some $c_1>0$ by Lemma \ref{beh:u}, 
there exists $c_2>0$ such that 
  \[
    p\int_\Omega z^p f(w) \varphi(w)
    + \int_\Omega uvz^p \varphi'(w)
    \le 
    c_2 \int_\Omega z^p
    \quad\mbox{on $(t_0,\tmax)$}.
  \]
Making use of the Gagliardo--Nirenberg inequality 
and Lemma \ref{mass} as well as 
Young's inequality, 
we obtain constants $c_3, c_4>0$ such that 
  \begin{align*}
    \int_\Omega z^p  
    \le 
    \| z^\frac{p}2 \|_{L^2(\Omega)}^2
    &\le 
    c_3
    \left(
    \| \nabla z^\frac{p}2 \|_{L^2(\Omega)}^{2\theta}
    \| z^\frac{p}2 \|_{L^\frac2p(\Omega)}^{2(1-\theta)}
    +
    \| z^\frac{p}2 \|_{L^\frac2p(\Omega)}^2
    \right)
    \\
    &\le 
    \frac{4\kappa}{c_2p^2} \| \nabla z^\frac{p}2 \|_{L^2(\Omega)}^{2} + c_4
    \quad\mbox{on $(t_0,\tmax)$},
  \end{align*}
where 
$\theta = \frac{\frac{p}2-\frac12}{\frac{p}2+\frac1n-\frac12} \in (0,1)$.
Hence we have
  \[
    \frac{d}{dt} \int_\Omega z^p \varphi(w)
    + p\int_\Omega z^p \varphi(w)
    \le c_2c_4
    \quad\mbox{on $(t_0,\tmax)$}.
  \]
This implies that
  \[
    \int_\Omega z^p(\cdot,t) \varphi(w(\cdot,t)) 
    \le \int_\Omega z^p(\cdot,t_0) \varphi(w(\cdot,t_0)) 
         + \frac{c_2c_4}p 
    \quad\mbox{for all $t\in(t_0,\tmax)$}, 
  \]
which together with the relation $\varphi(w) \ge (2b)^{-\ell}$ 
leads to
  \[
    \int_\Omega z^p(\cdot,t) 
    \le (2b)^{-\ell} 
         \left(
           \int_\Omega z^p(\cdot,t_0) \varphi(w(\cdot,t_0)) 
           + \frac{c_2c_4}p 
         \right)
         =:c_5
    \quad\mbox{for all $t\in(t_0,\tmax)$}.
  \]
Hence we arrive at the conclusion with 
$C:=\max\{ c_5, \sup_{t\in[0,t_0]} \| z(\cdot,t) \|_{L^\infty(\Omega)} \}$.
\end{proof}


\begin{corollary}\label{cor}
Let 
  \[
  \ep_2
  :=
  \begin{cases}
  \ep_1 &\mbox{if $n=2$},\\
  \min\{\ep_1,\zeta e^{-(1+\gamma)\ctime}\} &\mbox{if $n\ge3$},
  \end{cases}
  \]
where $\ep_1$ and $\zeta$ are defined 
in Lemmas \ref{beh:u} and \ref{Lpz}, respectively.
If $(u_0,v_0,w_0,z_0)$ satisfies \eqref{initial} and \eqref{initial:small} with $\ep \le \ep_2$, 
then \eqref{lpesti} holds with some $C>0$.
\end{corollary}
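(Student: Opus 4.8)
The plan is to reduce the corollary directly to Lemma~\ref{Lpz}: it suffices to produce, for every admissible initial datum, some $t_0\in[0,\tmax)$ at which the smallness condition \eqref{assum:small} starts to hold. The tool for this is the decay estimate of Lemma~\ref{beh:u}, which---since $\ep_2\le\ep_1$ in both cases and \eqref{initial:small} gives $\|v_0+\xi w_0\|_{L^\infty(\Omega)}\le\ep$---yields
\[
  \| v(\cdot,t) + \xi w(\cdot,t) \|_{L^\infty(\Omega)}
  \le \ep\, e^{(1+\gamma)\ctime}\, e^{-\gamma t}
  \qquad\mbox{for all }t\in[0,\tmax).
\]

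For $n\ge3$ I would exploit the explicit choice $\ep_2=\min\{\ep_1,\zeta e^{-(1+\gamma)\ctime}\}$: then the right-hand side above is at most $\ep\,e^{(1+\gamma)\ctime}\le\zeta$ uniformly in $t$, so \eqref{assum:small} holds with $t_0=0$, and Lemma~\ref{Lpz} delivers \eqref{lpesti}. For $n=2$ one has $\ep_2=\ep_1$, so the displayed bound is only known to decay, not to be small for all time; however Corollary~\ref{cor2d} guarantees $\tmax=\infty$, and since $e^{-\gamma t}\to0$ there is a (possibly solution-dependent) time $t_0\ge0$ beyond which the right-hand side drops below $\zeta$. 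Because $\tmax=\infty$ this $t_0$ automatically lies in $[0,\tmax)$, so \eqref{assum:small} is again satisfied and Lemma~\ref{Lpz} applies, the resulting constant $C$ now also absorbing $\sup_{t\in[0,t_0]}\|z(\cdot,t)\|_{L^\infty(\Omega)}$.

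The only genuinely delicate point is this split between the two regimes. In dimension two global existence is already in hand, so a soft ``such a $t_0$ exists'' argument is permissible; for $n\ge3$ the finiteness of $\tmax$ has not yet been excluded, so one cannot rely on $t_0<\tmax$ and must instead force $t_0=0$ through the quantitative threshold $\ep\le\zeta e^{-(1+\gamma)\ctime}$. Everything else is a direct invocation of Lemmas~\ref{beh:u} and~\ref{Lpz}.
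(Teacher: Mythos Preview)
Your proposal is correct and follows essentially the same argument as the paper: both verify the hypothesis \eqref{assum:small} of Lemma~\ref{Lpz} by splitting into the cases $n\ge3$ (where the quantitative threshold $\ep\le\zeta e^{-(1+\gamma)\ctime}$ forces $t_0=0$) and $n=2$ (where Corollary~\ref{cor2d} gives $\tmax=\infty$ so that the decay from Lemma~\ref{beh:u} eventually yields a valid $t_0$). Your remark about $C$ absorbing $\sup_{t\in[0,t_0]}\|z(\cdot,t)\|_{L^\infty(\Omega)}$ is already built into the conclusion of Lemma~\ref{Lpz} itself, so it need not be restated here.
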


\begin{proof}
It is sufficient to confirm that \eqref{assum:small} holds with some $t_0\in[0,\tmax)$.
In the case $n \ge 3$, 
combining Lemma \ref{beh:u} and \eqref{initial:small} with $\ep \le \ep_2=\min\{\ep_1,\zeta e^{-(1+\gamma)\ctime}\}$ enables us to obtain \eqref{assum:small} with $t_0=0$. 
On the other hand, in the case $n=2$, 
we see from Corollary \ref{cor2d} that 
\eqref{P} admits 
a corresponding global classical solution 
in \eqref{P} for every initial data.
Thus for the initial data satisfying \eqref{initial:small} with $\ep\le\ep_2=\ep_1$, 
Lemma \ref{beh:u} is established with $\tmax=\infty$. 
Therefore we can take $t_0\in[0,\infty)$ 
fulfilling \eqref{assum:small}.
Hence this corollary results from Lemma \ref{Lpz}.
\end{proof}

Thanks to 
Lemma \ref{beh:u} and 
Corollary \ref{cor}, we can show an $L^q$-decay estimate for $\nabla w$ via semigroup estimates. 

\begin{lem}\label{esti:nabwz}
If $(u_0,v_0,w_0,z_0)$ fulfills
\eqref{initial} and \eqref{initial:small}
with $\ep \le \ep_2$, which is defined in Corollary \ref{cor}, 
then there is $C>0$ such that 
  \[
    \| \nabla w(\cdot,t) \|_{L^q(\Omega)}
    \le C e^{-\gamma t}
    \quad\mbox{for all $t\in[0,\tmax)$}.
  \]
\end{lem}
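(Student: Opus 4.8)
The plan is to apply a variation-of-constants representation to the third equation in \eqref{P} and estimate each contribution using the semigroup bounds of Lemma \ref{semi} together with the decay information already gathered. Writing $w_t = \Delta w + uv - wz - w$ and using that $w \ge 0$, $z \ge 0$, I would first note that the zeroth-order dissipation $-w$ (and the sign of $-wz$) only helps; the one term needing care is the source $uv$. From Lemma \ref{beh:u} we know $\|u(\cdot,t)\|_{L^\infty(\Omega)}\le\umax$ and $\|v(\cdot,t)\|_{L^\infty(\Omega)}\le\|v(\cdot,t)+\xi w(\cdot,t)\|_{L^\infty(\Omega)}\le\|v_0+\xi w_0\|_{L^\infty(\Omega)}e^{(1+\gamma)\ctime}e^{-\gamma t}$ (since $v,w\ge0$), so that $\|(uv)(\cdot,t)\|_{L^\infty(\Omega)}\le c_1 e^{-\gamma t}$ with $c_1 := \umax\,\|v_0+\xi w_0\|_{L^\infty(\Omega)}e^{(1+\gamma)\ctime}$.

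Next I would fix an exponent $p$ as in the running assumption (large enough that $p>qn/(q-n)$, hence $p>n$), and split the analysis as follows. First, Corollary \ref{cor} gives $\|z(\cdot,t)\|_{L^p(\Omega)}\le C$ on $[0,\tmax)$; combined with the pointwise bound $0\le w\le\wmax$ that holds on $[t_0,\tmax)$ (from \eqref{assum:small}), and with whatever crude bound on $w$ is available on $[0,t_0]$ from continuity of the local solution, one controls the full source $uv - wz - w$ in $L^p(\Omega)$ — in fact the problematic parts decay: $uv$ decays like $e^{-\gamma t}$, while $wz + w$ is bounded and may be absorbed by first estimating $\|w(\cdot,t)\|_{L^\infty(\Omega)}$ itself. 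Indeed, applying Lemma \ref{semi}(i) and the $L^p$-$L^\infty$ estimate to the Duhamel formula for $w$, using the $e^{-\lambda t}$ factor from the semigroup and the extra $e^{-t}$ from the $-w$ term, yields $\|w(\cdot,t)\|_{L^\infty(\Omega)}\le c_2 e^{-\gamma t}$ for all $t\in[0,\tmax)$, since $\gamma<\min\{1,\lambda\}$. This upgrades the decay of $w$ from the $L^\infty$ bound on $v+\xi w$ to a decay rate matching $\gamma$.

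With $\|w(\cdot,t)\|_{L^\infty(\Omega)}\le c_2 e^{-\gamma t}$, $\|z(\cdot,t)\|_{L^p(\Omega)}\le C$, and $\|(uv)(\cdot,t)\|_{L^\infty(\Omega)}\le c_1 e^{-\gamma t}$ in hand, I would then differentiate: apply $\nabla$ to the Duhamel formula for $w$,
\[
  \nabla w(\cdot,t) = \nabla e^{t(\Delta-1)} w_0 + \int_0^t \nabla e^{(t-s)(\Delta-1)}\big[(uv)(\cdot,s) - (wz)(\cdot,s)\big]\,ds,
\]
and estimate the three pieces in $L^q(\Omega)$. For the initial piece, Lemma \ref{semi}(iii) gives $\|\nabla e^{t(\Delta-1)}w_0\|_{L^q(\Omega)}\le\Kth e^{-(\lambda+1)t}\|\nabla w_0\|_{L^q(\Omega)}\le\Kth e^{-\gamma t}\|\nabla w_0\|_{L^q(\Omega)}$. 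For the $uv$ term, Lemma \ref{semi}(ii) with $p=\infty$, together with $\|(uv)(\cdot,s)\|_{L^q(\Omega)}\le|\Omega|^{1/q}c_1 e^{-\gamma s}$, produces after the substitution $\sigma=t-s$ a bound $\le c_3 e^{-\gamma t}\int_0^\infty(1+\sigma^{-1/2-n/(2q)})e^{-(\lambda+1-\gamma)\sigma}\,d\sigma$, finite because $\gamma<\lambda+1$ and $1/2+n/(2q)<1$ when $q>n$. For the $wz$ term, I would use that $\|(wz)(\cdot,s)\|_{L^q(\Omega)}\le\|w(\cdot,s)\|_{L^\infty(\Omega)}\|z(\cdot,s)\|_{L^q(\Omega)}\le c_2 e^{-\gamma s}\cdot|\Omega|^{1/p-1/q}\,C$ (using $p\ge q$ so that $L^p\hookrightarrow L^q$ on a bounded domain), and the same semigroup estimate and substitution as for the $uv$ term close the bound. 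Summing the three contributions gives $\|\nabla w(\cdot,t)\|_{L^q(\Omega)}\le Ce^{-\gamma t}$ on $[0,\tmax)$.

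The main obstacle I anticipate is bookkeeping near $t=0$: the pointwise bound $w\le\wmax$ only holds on $[t_0,\tmax)$, so on $[0,t_0]$ one must instead invoke continuity of the local solution (Lemma \ref{local}) to bound $\|w\|_{L^\infty(\Omega)}$, $\|z\|_{L^p(\Omega)}$ and $\|\nabla w\|_{L^q(\Omega)}$ by finite constants there, and then absorb the discrepancy into $C$ by enlarging it as in the proof of Lemma \ref{Lpz}. Apart from this, the argument is a routine Duhamel-plus-semigroup computation once the $e^{-\gamma t}$ decay of $uv$ and $w$ has been established.
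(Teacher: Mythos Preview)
Your approach is correct and essentially the same as the paper's: write the Duhamel formula for $w$, bound the initial piece via Lemma~\ref{semi}(iii), and bound the source $uv-wz$ via Lemma~\ref{semi}(ii) using the $e^{-\gamma t}$ decay of $uv$ and $w$ together with the $L^p$ bound on $z$ from Corollary~\ref{cor}.

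Two simplifications are worth noting. First, you do not need a separate Duhamel argument to obtain $\|w(\cdot,t)\|_{L^\infty(\Omega)}\le c_2 e^{-\gamma t}$: since $v,w\ge0$, this is immediate from Lemma~\ref{beh:u}, which gives $\xi w\le v+\xi w\le \|v_0+\xi w_0\|_{L^\infty(\Omega)}e^{(1+\gamma)\ctime}e^{-\gamma t}$ on all of $[0,\tmax)$; for the same reason your concern about the interval $[0,t_0]$ is unfounded, and no patching via continuity is required. Second, rather than assuming $p\ge q$ and embedding $L^p\hookrightarrow L^q$ before applying the $L^q\!\to\!L^q$ gradient estimate to $wz$, the paper applies Lemma~\ref{semi}(ii) directly with source exponent $p$ and target exponent $q$, producing the singularity $(t-s)^{-\frac12-\frac n2(\frac1p-\frac1q)}$, which is integrable because $p>\frac{qn}{q+n}$. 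Both routes work; the paper's avoids any constraint linking $p$ and $q$. (Minor slips: the embedding constant should be $|\Omega|^{1/q-1/p}$, and in your $uv$ estimate the time singularity is $\sigma^{-1/2}$, not $\sigma^{-1/2-n/(2q)}$, since source and target are both $L^q$.)
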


\begin{proof}
From the third equation in \eqref{P} and 
semigroup estimates in Lemma \ref{semi}, 
  \begin{align*}
    &\| \nabla w(\cdot,t) \|_{L^q(\Omega)}
    \\
    &\le 
    \| \nabla e^{t(\Delta-1)} w_0 \|_{L^q(\Omega)}
    + \int_0^t \| \nabla e^{(t-s)(\Delta-1)} [uv-wz] \|_{L^q(\Omega)} \,ds
    \\
    &\le 
    \Kth e^{-(\lambda+1) t} \| \nabla w_0 \|_{L^q(\Omega)}
    + \Ktw |\Omega|^{\frac 1q} \int_0^t (1+(t-s)^{-\frac12})e^{-(\lambda+1)(t-s)} \| uv \|_{L^\infty(\Omega)} \,ds
    \\
    &\quad\,
    + \Ktw \int_0^t (1+(t-s)^{-\frac12 - \frac{n}{2}\left(\frac1p-\frac1q\right)})e^{-(\lambda+1)(t-s)} \| wz \|_{L^p(\Omega)} \,ds
  \end{align*}
for all $t\in[0,\tmax)$.
Noting from the fact $p >\frac{qn}{q-n}>\frac{qn}{q+n}$ that $-\frac12 - \frac{n}{2}\big(\frac1p-\frac1q\big)>-1$,  
thanks to Lemma \ref{beh:u} and Corollary \ref{cor} 
we have 
  \[
    \| (uv)(\cdot,s) \|_{L^\infty(\Omega)}
    \le 
    c_1e^{-\gamma s}
\quad\mbox{and}\quad 
    \| (wz)(\cdot,s) \|_{L^p(\Omega)}
    \le 
    c_2e^{-\gamma s}
  \]
for all $s\in(0,\tmax)$ with some $c_1,c_2>0$, 
and hence
a combination of the above inequalities 
leads to the conclusion.
\end{proof}

We finally attain an $L^\infty$-decay estimate for $z$. 

\begin{lem}\label{bddz}
If $(u_0,v_0,w_0,z_0)$ satisfies 
\eqref{initial} and \eqref{initial:small}
with $\ep \le \ep_2$, which is defined in Corollary \ref{cor}, 
then there is $C>0$ such that
  \[
    \|z(\cdot,t)\|_{L^\infty(\Omega)}
    \le 
    C e^{-\gamma t}
    \quad\mbox{for all $t\in[0,\tmax)$}.
  \]
\end{lem}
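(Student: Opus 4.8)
The plan is to bootstrap from the $L^p$-bound on $z$ established in Corollary~\ref{cor} and the decay estimate for $\nabla w$ from Lemma~\ref{esti:nabwz} to an $L^\infty$-decay estimate for $z$, using the variation-of-constants representation for the fourth equation in \eqref{P} together with the smoothing estimates of Lemma~\ref{semi}. Writing $z$ via the semigroup $(e^{t(\Delta-1)})_{t\ge0}$, we have
\[
  z(\cdot,t) = e^{t(\Delta-1)} z_0
   - \int_0^t e^{(t-s)(\Delta-1)} \nabla\cdot\big(z(\cdot,s)\nabla w(\cdot,s)\big)\,ds
   + \int_0^t e^{(t-s)(\Delta-1)} \big(f(w)z\big)(\cdot,s)\,ds.
\]
For the first term, the $L^p$-$L^\infty$ estimate in Lemma~\ref{semi}(i) (applied to $z_0$, after splitting off the mean, or simply using $\|z_0\|_{L^\infty(\Omega)}<\infty$) gives a bound of order $e^{-t}$, hence $\lesssim e^{-\gamma t}$ since $\gamma<\lambda$ and in particular $\gamma\le 1$. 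For the reaction term, $0\le f(w)\le w\le \wmax$ on the relevant time interval (once $w$ is small, which holds for $t\ge t_0$ by \eqref{assum:small}; on the bounded initial interval $[0,t_0]$ everything is controlled by Lemma~\ref{local}), so $\|(f(w)z)(\cdot,s)\|_{L^p(\Omega)} \le \wmax\|z(\cdot,s)\|_{L^p(\Omega)} \lesssim e^{-\gamma s}$ by Corollary~\ref{cor} — actually we want a decay factor here, so we instead use that $f(w)z \le wz$ and $\|(wz)(\cdot,s)\|_{L^p(\Omega)}\le c\,e^{-\gamma s}$ as already noted in the proof of Lemma~\ref{esti:nabwz}.

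The key term is the chemotaxis contribution $\int_0^t e^{(t-s)(\Delta-1)}\nabla\cdot(z\nabla w)\,ds$. Here I would apply Lemma~\ref{semi}(iv) with the exponent pair $(r,\infty)$ for a suitable $r>n$: since $z\in L^p$ with $p>\tfrac{qn}{q-n}\ge n$ and $\nabla w\in L^q$, Hölder's inequality gives $z\nabla w\in L^r$ with $\tfrac1r=\tfrac1p+\tfrac1q<\tfrac1n$, so $r>n$ and the singular exponent $\tfrac12+\tfrac n2(\tfrac1r-\tfrac1\infty) = \tfrac12+\tfrac{n}{2r}$ is strictly less than $1$, which makes $\int_0^\infty(1+\sigma^{-\frac12-\frac n{2r}})e^{-\sigma}\,d\sigma$ finite. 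Combined with $\|(z\nabla w)(\cdot,s)\|_{L^r(\Omega)} \le \|z(\cdot,s)\|_{L^p(\Omega)}\|\nabla w(\cdot,s)\|_{L^q(\Omega)} \le C\,e^{-\gamma s}$ (from Corollary~\ref{cor} and Lemma~\ref{esti:nabwz}), the standard estimate
\[
  \int_0^t \big(1+(t-s)^{-\frac12-\frac{n}{2r}}\big) e^{-(\lambda+1)(t-s)} e^{-\gamma s}\,ds
  \le e^{-\gamma t}\int_0^\infty \big(1+\sigma^{-\frac12-\frac{n}{2r}}\big) e^{-(\lambda+1-\gamma)\sigma}\,d\sigma
\]
(valid because $\gamma<\lambda<\lambda+1$) yields the desired $e^{-\gamma t}$ bound for this term as well. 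Summing the three contributions gives $\|z(\cdot,t)\|_{L^\infty(\Omega)}\le C e^{-\gamma t}$ for $t\in[0,\tmax)$.

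The main obstacle — and it is a mild one given the groundwork — is bookkeeping around the time $t_0$: on $[t_0,\tmax)$ we have the smallness $w\le\wmax$ needed for Lemma~\ref{Lpz}, but on $[0,t_0]$ one must absorb the finitely many bounded quantities into the constant $C$ (as was done at the end of the proof of Lemma~\ref{Lpz} by taking a maximum with $\sup_{t\in[0,t_0]}\|z(\cdot,t)\|_{L^\infty(\Omega)}$), and then restart the variation-of-constants argument from $t_0$ rather than from $0$, which costs only a harmless factor $e^{\gamma t_0}$ in the constant. A secondary technical point is ensuring the exponent $r$ can genuinely be chosen $>n$; this follows directly from the standing assumption $p>\tfrac{qn}{q-n}$, which was imposed precisely for this purpose. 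Once these points are in place, the estimate is immediate and completes the proof of Theorem~\ref{thm}, since collecting Lemma~\ref{beh:u}, Lemma~\ref{esti:nabwz} and the present lemma gives the full decay bound $\|u(\cdot,t)-\beta\|_{L^\infty(\Omega)}+\|v(\cdot,t)\|_{W^{1,q}(\Omega)}+\|w(\cdot,t)\|_{W^{1,q}(\Omega)}+\|z(\cdot,t)\|_{L^\infty(\Omega)}\le Ce^{-\gamma t}$, and the criterion \eqref{criterion} together with the boundedness of $u$ and $z$ forces $\tmax=\infty$.
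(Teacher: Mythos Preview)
Your proposal is correct and follows essentially the same route as the paper: variation of constants for the $z$-equation, Lemma~\ref{semi}(iv) applied to $z\nabla w\in L^r$ with $\tfrac1r=\tfrac1p+\tfrac1q$ (so $r>n$ precisely because $p>\tfrac{qn}{q-n}$), and the $L^p$-decay of $f(w)z\le wz$ via Lemma~\ref{beh:u} and Corollary~\ref{cor}. Your worry about bookkeeping around $t_0$ is unnecessary here, since Corollary~\ref{cor} and Lemma~\ref{esti:nabwz} already deliver their bounds on the full interval $[0,\tmax)$, so the variation-of-constants argument can be run from $t=0$ without any restart.
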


\begin{proof}
We let $r=\frac{pq}{p+q}$. Then we have $r > n$ (from $p > \frac{qn}{q-n}$) and $r < q$ as well as $p=\frac{qr}{q-r}$. 
From the fourth equation in \eqref{P}
and semigroup estimates in Lemma \ref{semi} 
we see that
  \begin{align}\label{zlinf}
    \| z(\cdot,t) \|_{L^\infty(\Omega)}
    &\le \| e^{t(\Delta-1)}z_0 \|_{L^\infty(\Omega)}
         + \int_0^t \| e^{(t-s)(\Delta-1)} \nabla \cdot (z \nabla w) \|_{L^\infty(\Omega)} \,ds
    \notag \\
    &\quad\,
         + \int_0^t \| e^{(t-s)(\Delta-1)} wz \|_{L^\infty(\Omega)} \,ds
    \notag \\
    &\le e^{-t} \| z_0 \|_{L^\infty(\Omega)} 
         + \Kf \int_0^t (1+(t-s)^{-\frac12-\frac{n}{2r}})e^{-(\lambda+1)(t-s)}\| z \nabla w \|_{L^r(\Omega)} \,ds
    \notag \\
    &\quad\,
         + \Ko \int_0^t (1+(t-s)^{-\frac{n}{2p}})e^{-(\lambda+1)(t-s)}\| wz \|_{L^p(\Omega)} \,ds
  \end{align}
for all $t\in[0,\tmax)$.
By Lemmas \ref{beh:u}, \ref{esti:nabwz} and Corollary \ref{cor}, 
we obtain constants $c_1,c_2>0$ such that
  \[
    \| (z \nabla w)(\cdot,s) \|_{L^r(\Omega)} \le \lp{p}{z(\cdot,s)}\lp{q}{\nabla w(\cdot,s)}
    \le 
    c_1 e^{-\gamma s}
  \]
and 
 \[
   \| (wz)(\cdot,s) \|_{L^p(\Omega)}
   \le 
   c_2 e^{-\gamma s}
 \]
for all $s \in (0,\tmax)$. 
Inserting these inequalities into \eqref{zlinf} yields the conclusion.
\end{proof}

\begin{proof}[Proof of Theorem \ref{thm}]
Lemmas \ref{beh:u}, \ref{esti:nabwz} and \ref{bddz} together with \eqref{criterion}
yield $\tmax=\infty$ and the conclusion of the theorem.
\end{proof}

\section*{Declaration of competing interest}

The authors declare that they have no known competing financial interests or personal relationships that could have appeared
to influence the work reported in this paper.

\section*{Acknowledgment}
The authors M.M. and Y.T. are partially supported by JSPS KAKENHI Grant Number JP25K17265 and JP24K22844, respectively.

\bibliographystyle{plain}
\bibliography{paper_MT_025_revision_arXiv.bbl}
\end{document}